\documentclass[10pt, one side,emlines]{amsart}
\usepackage{tikz}
\usepackage{amssymb,latexsym,xy,eucal,mathrsfs}
\usepackage{bm,amsfonts,amssymb,amsthm,newlfont,eucal, mathrsfs,latexsym, caption}
\textwidth=15.2cm \textheight=24cm \theoremstyle{plain}
\newtheorem{lem}{Lemma}[section]

\newtheorem{thm}[lem]{Theorem}
\newtheorem{prop}[lem]{Proposition}
\newtheorem{cor}[lem]{Corollary}
\theoremstyle{definition}

\newcommand{\Z}{{\mathbb{Z}}}

\begin{document}

	\baselineskip 14truept
	\title{Linear and group identifying codes in Hamming Graphs}
	
	\author{N. V. Shinde and S. A. Mane }
	
	\dedicatory{Department of Mathematics, COEP Technological University, Pune-411005, India.\\Center for Advanced Studies in Mathematics,
		Department of Mathematics,\\ Savitribai Phule Pune University, Pune-411007, India.\\
		 nvs.maths@coeptech.ac.in : manesmruti@yahoo.com  \\}
	
	\maketitle
	
	\begin{abstract}

Codes are crucial in many areas of applications. Different types of codes are designed to meet specific needs, which makes them more effective and useful.
Linear codes are extensively used in data storage systems. Identifying codes are essential for locating malfunctioning processors. To combine these benefits, researchers have looked into a type of code called linear identifying codes. These codes blend the error-correction abilities of linear codes with the fault-finding capabilities of identifying codes.
Group codes are also highly regarded for their strong properties and reliable decoding methods.
In our work, we introduce a new type of identifying code called group Identifying codes. These codes aim to bring together the best features of both Identifying codes and group codes, offering enhanced performance in fault detection and system reliability. In this paper, we establish limits on the smallest size of a group identifying code when \( G \) is an \( n \)-dimensional Hamming cube \( K_{m_1} \square K_{m_2} \square \dots \square K_{m_n} \). Additionally, we determine the smallest size of a linear identifying code in \( K_p^n \) for a prime \( p \) and \( n \geq 2 \). In [1], it was hypothesized that \( \gamma^{ID}(K_m^3) = m^2 \) for an integer \( m \geq 2 \). Although this conjecture was disproven in [2], we demonstrate that group identifying codes in \( K_m^3 \) for an integer \( m \geq 2 \) and linear identifying codes in \( K_p^3 \) for a prime \( p \) indeed fulfill this conjecture.

	\end{abstract}

	%\maketitle
	
	\noindent {\bf Keywords:}  Group code, linear code, Identifying code, Hamming graph.\\
	\noindent {\bf Mathematics Subject Classification:} 68R10, 05C69, 05C76.
	
	\section{Introduction} 
	
	In this paper, we focus on simple, connected, and undirected graphs. 
	Let $G$ be a graph and let $v \in V(G)$ be a vertex. For a subset $C \subseteq V(G)$, $
	 N_G[v] \cap C  = J_C(v)$ (say)
		where $N_G[v]$ denotes the closed neighborhood of $v$ in $G$.

	 If the set \( J_C(v) \) is non-empty and distinct for all vertices in \( G \), then \( C \) is called an identifying code. The minimum cardinality of an identifying code in $G$ is denoted by $\gamma^{ID}(G)$. Identifying codes can only exist in a graph if the graph is twin-free, meaning no two distinct vertices have exactly the same set of neighbors.
	
	 Codes play a pivotal role across a wide array of applications, with various types tailored to address specific requirements, thereby increasing their effectiveness and utility. Linear codes \cite{ha, luo}, for example, are widely utilized in data storage systems due to their robustness in error correction. Identifying codes, on the other hand, are indispensable for pinpointing malfunctioning processors. In an effort to harness the strengths of both, researchers have explored a novel category of code known as linear identifying codes\cite{ra, ran}, which seamlessly integrate the error-correction prowess of linear codes with the fault-detection capabilities of identifying codes.
	
	Group codes\cite{be, ga, pi, pil, pill} are similarly esteemed for their strong structural properties and reliable decoding mechanisms. Building on this foundation, our work introduces a new class of identifying codes—group identifying codes. These codes are designed to merge the key advantages of identifying codes and group codes, providing an elevated level of fault detection and enhancing system reliability. 
	
Thus, an identifying code of a graph \( G \) is a code that is both dominating and separating. If \( V(G) \) is a group, a group identifying code is an identifying code that forms a subgroup. If \( V(G) \) is a vector space, a linear identifying code is an identifying code that forms a subspace. The minimum size of an identifying code (group identifying code, linear identifying code) in \( G \) is denoted as \( \gamma^{ID}(G) \) (\( \gamma^{GID}(G) \), \( \gamma^{LID}(G) \), respectively). Since a group or linear identifying code is also an identifying code, \( \gamma^{GID}(G) \geq \gamma^{ID}(G) \) (\( \gamma^{LID}(G) \geq \gamma^{ID}(G) \)).
	
	An \( n \)-dimensional Hamming cube is the product of \( n \) complete graphs \( K_{m_1} \square K_{m_2} \square \cdots \square K_{m_n} \), where each \( K_m \) is a complete graph on \( m \) vertices. The Hamming distance \( d_H(u, v) \) between two vertices \( u \) and \( v \) is the number of edges between them in the cube. The Hamming weight \( wt_H(u) \) of a vertex \( u \) is the number of non-zero coordinates in \( u \). Two vertices are adjacent if they differ in exactly one coordinate. Thus, we consider the $n$-dimensional Hamming cube $K_{m_1} \square \dots \square K_{m_n}$, which can be viewed as a graph on the group $\mathbb{Z}_{m_1} \times \dots \times \mathbb{Z}_{m_n}$, where two vertices are adjacent if they differ in exactly one coordinate. If all \( m_i = m \), the graph is called a \( K_m^n \), or an \( n \)-dimensional \( m \)-ary Hamming cube. The vertices of the graph form an abelian group under componentwise addition, where each component uses modular addition. The Hamming distance between two vertices \( u \) and \( x \) is the Hamming weight of \( u - x \).
	
		In 1998, Karpovski et al.\cite{k}. introduced identifying codes in graphs. They studied these codes in structures like the Binary Hypercube, Non-binary Lee cubes, and other topologies such as trees, hexagonal mesh, and triangular mesh, Many researchers\cite{bla, blas, cha, ex, exo, k,ra, ran} have since explored identifying codes in binary hypercubes.
	
	In 2008, Gravier et al.\cite{gr} found the exact value of the identifying code \( \gamma^{ID}(K_m \square K_m) = \lceil \frac{3m}{2} \rceil \) for \( m \geq 2 \). In 2013, Goddard and Wash\cite{go} proved that for \( n \leq m \), \( \gamma^{ID}(K_n \square K_m) = \max \{ 2m - n, m + \lfloor \frac{n}{2} \rfloor \} \). They also showed that \( \gamma^{ID}(K_m^d) \leq m^{d-1} \) for \( d \geq 3 \), and \( \gamma^{ID}(K_m^3) \geq m^2 - m\sqrt{m} \). They conjectured that \( \gamma^{ID}(K_m^3) \geq m^2 \) for all \( m \geq 1 \). However, in 2019, Junnila et al.\cite{jun} disproved this conjecture by showing that \( \gamma^{ID}(K_m^3) \leq m^2 - \frac{m}{4} \) when \( m \) is a power of four, and improved the lower bound to \( \gamma^{ID}(K_m^3) \geq m^2 - \frac{3}{2}m \). They found that the conjecture holds for a class of codes called self-locating-dominating codes.
	
	In 2018\cite{junn}, self-identifying codes were introduced, where a code \( C \) in a graph \( G \) is self-identifying if it's identifying and for all \( u \in V \) and \( U \subseteq V(G) \), \( |U| \geq 2 \) and \( J_C(u) \neq J_C(U) \). In 2019\cite{jun}, self-locating-dominating codes were characterized, where a code \( C \) is self-locating-dominating in \( G \) if for every \( u \in V(G) - C \), \( J_C(u) \neq \emptyset \) and \( \bigcap_{c \in J_C(u)} N[c] = \{u\} \).
	
	Theorems on self-locating-dominating codes and self-identifying codes are provided in the text.
	\begin{thm}[\cite{junn}]\label{2j1} A code $D$ is a self-locating-dominating code in a graph $G$ if and only if for each non-codeword $x$ and $y\in V(G)-\{x\}$, $J_D(x)-J_D(y)\neq \emptyset$. 	\end{thm}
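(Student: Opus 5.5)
We prove both directions directly from the definition of a self-locating-dominating code given above: $D$ is self-locating-dominating if every non-codeword $x$ has $J_D(x)\neq\emptyset$ and $\bigcap_{c\in J_D(x)} N[c]=\{x\}$. The key observation is that for any vertex $y$ and codeword $c$, $y\in N[c]$ if and only if $c\in N[y]$, i.e. $c\in J_D(y)$. Hence
\[
y\in \bigcap_{c\in J_D(x)} N[c] \iff J_D(x)\subseteq J_D(y),
\]
with the convention that the empty intersection is $V(G)$. Both directions reduce to this equivalence. Only non-codewords $x$ appear in either condition, so codewords impose no constraint in either formulation.

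For the forward direction, suppose $D$ is self-locating-dominating. Let $x\notin D$ and $y\neq x$. Since $\bigcap_{c\in J_D(x)}N[c]=\{x\}$, we have $y\notin\bigcap_{c\in J_D(x)}N[c]$. By the equivalence, $J_D(x)\not\subseteq J_D(y)$, that is, $J_D(x)-J_D(y)\neq\emptyset$.

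For the converse, suppose $J_D(x)-J_D(y)\neq\emptyset$ for every non-codeword $x$ and every $y\neq x$.
\begin{itemize}
\item \emph{Domination.} This needs $G$ to have at least two vertices, which holds for the connected graphs of interest; the one-vertex case is trivial to check separately. Choosing any $y\neq x$ gives $J_D(x)\neq\emptyset$.
\item \emph{The intersection equals $\{x\}$.} Since $c\in J_D(x)$ gives $x\in N[c]$, the vertex $x$ lies in the intersection. For $y\neq x$, the hypothesis gives $J_D(x)\not\subseteq J_D(y)$, so by the equivalence $y$ is not in the intersection.
\end{itemize}
Hence $\bigcap_{c\in J_D(x)}N[c]=\{x\}$ for every non-codeword $x$, and $D$ is self-locating-dominating.

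There is no real obstacle. The only points needing care are the convention for the empty intersection and the use of $|V(G)|\ge 2$ to obtain domination in the converse.
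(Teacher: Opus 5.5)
The paper does not prove this statement; it is quoted from the literature, and your argument is the standard direct proof. Your key equivalence $y\in\bigcap_{c\in J_D(x)}N[c]\iff J_D(x)\subseteq J_D(y)$ is correct: it holds because $y\in N[c]$ iff $c\in N[y]$, and $c\in D$. Both directions then follow exactly as you describe, so the argument is sound whenever $|V(G)|\ge 2$.

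One correction concerns the one-vertex case, which you say is ``trivial to check separately''. It is in fact a counterexample to the converse as literally stated. Take $G=K_1$ and $D=\emptyset$. The right-hand condition holds vacuously, since there is no $y\neq x$. But $J_D(x)=\emptyset$, so $D$ is not self-locating-dominating. The ``if'' direction therefore genuinely requires $|V(G)|\ge 2$. You should state this as a standing hypothesis rather than treat it as a harmless exception. The hypothesis holds for every Hamming graph the paper considers, since all $m_i\ge 2$.
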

	In \cite{jun}, self-identifying codes are charaterised as follows.
	\begin{thm}[\cite{jun}]\label{2j2} Let $q$ be a prime power. A code $D$ is a self-identifying code in a graph $\Z_q^n$ if and only if for each word $x$, $|J_D(x)|\geq 3$ and there exists $d,~d'\in J_D(x)$ such that $d(d,d')=2$. \end{thm}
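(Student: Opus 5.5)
We will prove Theorem~\ref{2j2} by first replacing the defining condition of a self-identifying code with a local, neighbourhood-intersection condition, and then computing such intersections explicitly in the Hamming graph $\Z_q^n$ (that is, $K_q^n$). The reformulation is the one established in \cite{junn}: $D$ is self-identifying if and only if, for every vertex $x$, the set $J_D(x)$ is non-empty and
\[
\bigcap_{c\in J_D(x)} N[c]=\{x\}.
\]
We take this equivalence as known from \cite{junn}. Reproving it is the only delicate point, since it requires the precise definition of $J_D(U)$ for a set $U$. After that, the theorem reduces to a purely geometric statement about closed neighbourhoods in $\Z_q^n$.

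The geometric step uses the structure of $N[x]$. It consists of $x$ together with the $n$ \emph{lines} through $x$. The line in direction $i$ is the set of vertices that differ from $x$ only in coordinate $i$, and each line is a clique of size $q$. Two elements of $N[x]$ are therefore at distance $0$, $1$ or $2$, and they are at distance $2$ exactly when they are neighbours of $x$ lying on different lines. We then compute the possible intersections.
\begin{itemize}
\item If $d,d'\in N[x]$ are adjacent, they lie on a common line $L$ through $x$ (possibly one of them is $x$). In that case $N[d]\cap N[d']\supseteq L$, and $|L|=q\ge 2$.
\item If $d=x+a e_i$ and $d'=x+b e_j$ with $i\neq j$ and $a,b\neq 0$, then $N[d]\cap N[d']=\{x,z\}$, where $z=d+d'-x$.
\end{itemize}

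\emph{Sufficiency.} Suppose $|J_D(x)|\ge 3$ and $J_D(x)$ contains $d,d'$ with $d(d,d')=2$. The intersection over $J_D(x)$ is then contained in $\{x,z\}$. Let $c$ be a third element of $J_D(x)$. If $c=x$, then $d(c,z)=2$. If $c$ lies on the line of $d$ (or of $d'$), then $c$ differs from $z$ in two coordinates. If $c$ lies on a third line through $x$, then $c$ differs from $z$ in three coordinates. In every case $z\notin N[c]$, so the intersection equals $\{x\}$, and the reformulation shows that $D$ is self-identifying.

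\emph{Necessity.} Suppose the condition fails at some $x$; we show the intersection is strictly larger than $\{x\}$.
\begin{itemize}
\item If $J_D(x)=\emptyset$, then $x$ is not dominated and $D$ is not self-identifying.
\item If no two elements of $J_D(x)$ are at distance $2$, then all of them lie on a single line $L$ through $x$. This includes the case $J_D(x)=\{x\}$, where any line through $x$ will do. The intersection then contains $L$, which has at least $2$ vertices.
\item If $|J_D(x)|=2$ and its two elements are at distance $2$, then the intersection is exactly $\{x,z\}$.
\end{itemize}
In each case the reformulation shows that $D$ is not self-identifying.

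The main obstacle is the first step, the equivalence between self-identification and the neighbourhood-intersection condition. If it must be reproved rather than cited from \cite{junn}, one uses the following observation: a vertex $v\ne x$ lies in $\bigcap_{c\in J_D(x)}N[c]$ exactly when $J_D(x)\subseteq J_D(v)$. In that case $J_D(x)=J_D(v)\cap J_D(x)$, which gives the forbidden coincidence for a suitable two-element set $U$. The converse runs the same argument backwards. The Hamming-graph computations above are routine.
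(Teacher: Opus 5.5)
The paper gives no proof of Theorem~\ref{2j2}. It imports the result from \cite{jun} and uses it only to classify the two codes of Figure~1, so your argument has nothing in the paper to follow; it is correct as a self-contained proof. Your reduction to the criterion ($J_D(x)\neq\emptyset$ and $\bigcap_{c\in J_D(x)}N[c]=\{x\}$) is the natural route. Your two intersection computations are exactly the content of Lemma~\ref{2tm}, so you could simply invoke it. In the necessity direction your case split is exhaustive: $J_D(x)$ empty, $J_D(x)$ contained in one line, or exactly two elements at distance $2$. In the sufficiency direction the third element $c$ is at distance $2$ (when $c=x$ or $c$ shares a line with $d$ or $d'$) or $3$ (third line) from $z$, as you say.

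Two small points. First, in your sketch of the reformulation, the equation $J_D(x)=J_D(v)\cap J_D(x)$ only restates the inclusion. The forbidden coincidence is $J_D(v)=J_D(x)\cup J_D(v)=J_D(\{x,v\})$, i.e.\ $u=v$ and $U=\{x,v\}$. This needs $J_D(U)$ to mean $\bigcup_{w\in U}J_D(w)$, which the paper never defines. For the converse, if $J_D(u)=J_D(U)$ with $|U|\ge 2$, take any $w\in U\setminus\{u\}$. Then $J_D(w)\subseteq J_D(u)$ forces $u\in\bigcap_{c\in J_D(w)}N[c]=\{w\}$, a contradiction; the same inclusion argument also gives ordinary separation. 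Second, your proof never uses that $q$ is a prime power, nor that all coordinates have the same alphabet size. Lines only need at least two vertices, so the same argument characterizes self-identifying codes in every $K_{m_1}\square\cdots\square K_{m_n}$ with $m_i\ge 2$, which is the setting of the rest of the paper.
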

	 In this paper we discuss two classes of codes—group identifying codes and linear identifying codes—which are different from self-identifying and self-locating-dominating codes, and for which the conjecture holds.
	
Here’s an example where a code is both self-identifying and self-locating-dominating, but not a group or linear identifying code, showing the difference between these types of codes.

Consider \( G = \mathbb{Z}_2^3 \). 

- The code \( D \) in Figure 1(a) is a group (linear) identifying code. However, since \( J_D((1,1,1)) - J_D((0,1,1)) = \emptyset \), this code is not self-locating-dominating (based on Theorem 1.1). Also, since \( |J_D((1,1,1))| = 1 \), this code is not self-identifying (based on Theorem 1.2).

- The code \( D \) in Figure 1(b) is both self-locating-dominating and self-identifying (based on Theorems 1.1 and 1.2). However, since \( (1,0,0) \) and \( (0,1,0) \) are in \( D \), but \( (1,0,0) + (0,1,0) = (1,1,0) \) is not in \( D \), this code is neither a group nor a linear identifying code.\\\\

	\begin{center}
		$\begin{tikzpicture} [scale=0.85]
			\draw (8,-5)--(6,-5)--(6,-3)--(8,-3)--(8,-5);
			\draw (6,-5)--(7,-4)--(7,-2)--(6,-3);
			\draw  (7,-4)--(9,-4)--(8,-5);
			\draw  (9,-4)--(9,-2)--(7,-2);
			\draw  (9,-2)--(8,-3);
			\draw[fill](6,-5) circle(.1);
			\draw[fill](6,-3) circle(.1);
			\draw[fill](7,-4) circle(.1);
			\draw[fill](8,-5) circle(.1);
			\draw[fill](8,-3) circle(.1);
			\draw[fill](7,-2) circle(.1);
			\draw[fill=white](9,-4) circle(.1);
			\draw[fill](9,-2) circle(.1);
			\node [below] at (6,-5){\tiny{$(0,0,0)$}};
			\node [above left] at (6,-3.3){\tiny{$(0,0,1)$}};
			\node [below] at (8,-5){\tiny{$(0,1,0)$}};
			\node [above left] at (8.3,-3.1){\tiny{$(0,1,1)$}};
			\node [below right] at (6.8,-4){\tiny{$(1,0,0)$}};
			\node [above right] at (6.8,-2){\tiny{$(1,0,1)$}};
			\node [below right] at (8.8,-4){\tiny{$(1,1,0)$}};
			\node [above right] at (8.8,-2){\tiny{$(1,1,1)$}};
			\node [below] at (7,-5.5){Figure 1(b)};

			\draw (2,-5)--(0,-5)--(0,-3)--(2,-3)--(2,-5);
			\draw (0,-5)--(1,-4)--(1,-2)--(0,-3);
			\draw  (1,-4)--(3,-4)--(2,-5);
			\draw  (3,-4)--(3,-2)--(1,-2);
			\draw  (3,-2)--(2,-3);
			\draw[fill](0,-5) circle(.1);
			\draw[fill](0,-3) circle(.1);
			\draw[fill=white](1,-4) circle(.1);
			\draw[fill](2,-5) circle(.1);
			\draw[fill](2,-3) circle(.1);
			\draw[fill=white](1,-2) circle(.1);
			\draw[fill=white](3,-4) circle(.1);
			\draw[fill=white](3,-2) circle(.1);
			
			\node [below] at (0,-5){\tiny{$(0,0,0)$}};
			\node [above left] at (0,-3.3){\tiny{$(0,0,1)$}};
			\node [below] at (2,-5){\tiny{$(0,1,0)$}};
			\node [above left] at (2.3,-3.1){\tiny{$(0,1,1)$}};
			\node [below right] at (.8,-4){\tiny{$(1,0,0)$}};
			\node [above right] at (.8,-2){\tiny{$(1,0,1)$}};
			\node [below right] at (2.8,-4){\tiny{$(1,1,0)$}};
			\node [above right] at (2.8,-2){\tiny{$(1,1,1)$}};
			\node [below] at (1,-5.5){Figure 1(a)};
			\node [below] at (4.5,-6.2) {Examples showing that self-locating-dominating codes and  };
			\node [below] at (4.5,-6.9) {self-identifying codes are different from group (linear) identifying codes};
			\node [below] at (4.5,-7.6) {(filled circles are codewords) };
		\end{tikzpicture}$
	\end{center}

In 2003, Ranto \cite{ra} studied binary linear identifying codes and proved that for any integer $r \geq 1$, if $3(2^r - 1) \leq n \leq 3(2^{r+1} - 1) - 1$, then the identifying code number of the Hamming graph $K_2^n$ is given by $\gamma^{\text{LID}}(K_2^n) = 2^{n - r}$. Further investigations into binary $r$-identifying codes were presented in \cite{ran}.

This work extends the study of identifying codes to the non-binary setting by considering group (or linear) codes over $\mathbb{Z}_p^n$, where $p > 2$ is a prime. In particular, we construct optimal linear identifying codes in $\mathbb{Z}_p^n$, and show that for every integer $q \geq 2$, the optimal group identifying codes in $\mathbb{Z}_q^3$ have cardinality $q^2$. These constructions generalize the linear identifying codes in $\mathbb{Z}_2^n$ introduced in \cite{ge} and form part of the broader study of identification problems in Hamming graphs.

	\section{\textbf{Preliminaries}}

	Throughout this paper, we assume that \( K_m \) is a complete graph with \( m \geq 2 \) vertices. We use \( d(u,v) \) to denote the\textit{ Hamming distance} between two vertices \( u \) and \( v \), and \( wt(u) \) for the \textit{Hamming weight} of a vertex \( u \). The \textit{zero vector} in the graph \( G \) is denoted by \( \mathbf{0} \).
	
	For a vertex \( u = (u_1, u_2, \ldots, u_n) \in V(G) \), its \textit{additive inverse} is defined as  
	\[ -u = (-u_1, -u_2, \ldots, -u_n) = (m_1 - u_1, m_2 - u_2, \ldots, m_n - u_n). \]
	
	We define the vector \( e_i^j \) as the vector with 0 in all positions except the \( i^{th} \) position, which is \( j \):  
	\[ e_i^j = (0, \ldots, 0, \underbrace{j}_{i^{th} \text{ place}}, 0, \ldots, 0), \]
	for \( 1 \leq i \leq n \) and \( 0 \leq j \leq m_i - 1 \). Then, its inverse is \( -e_i^j = e_i^{m_i - j} \). In particular, \( e_i^0 = \mathbf{0} \) for all \( i \). See Figure $2$\\\\
	
	\begin{center}
	\begin{tikzpicture}[scale=1.1]
		\draw (0,0) -- (0,3) --  (0,6) ;\draw (0,0) -- (3,0) --  (6,0) ;
		\draw (1,-1)--  (1,2)  -- (1,5) ;	\draw (2,-2) --  (2,1) -- (2,4) ;	\draw  (3,0)--   (3,3)--  (3,6);
		\draw  (0,3)--   (3,3)--  (6,3);	\draw (4,-1) -- (4,2) -- (4,5);	\draw (5,-2)--  (5,1) --  (5,4) ;	\draw   (6,0)-- (6,3)-- (6,6);	\draw   (0,6)-- (3,6)-- (6,6);	\draw (7,-1) --  (7,2) --  (7,5) ;	\draw (8,-2) -- (8,1) --  (8,4) ;
		
		\draw (0,0)--(1,-1)--  (2,-2)  ;\draw (0,3) --  (1,2)--(2,1) ;\draw (0,6) -- (1,5) -- (2,4);
		\draw (3,0)--(4,-1)--  (5,-2)  ;\draw (3,3) --  (4,2)--(5,1) ;\draw (3,6) -- (4,5) -- (5,4);
		\draw (6,0)--(7,-1)--  (8,-2)  ;\draw (6,3) --  (7,2)--(8,1) ;\draw (6,6) -- (7,5) -- (8,4);
		\draw (1,-1)--(4,-1)--(7,-1);\draw (1,2)--(4,2)--(7,2);\draw (1,5)--(4,5)--(7,5);
		\draw (2,-2)--(5,-2)--(8,-2);\draw(2,1)--(5,1)--(8,1) ;\draw (2,4)--(5,4)--(8,4);
		
		\draw (0,0).. controls (0.3,3)..(0,6);\draw (1,-1).. controls (1.3,3)..(1,5);\draw (2,-2).. controls (2.3,2)..(2,4);
		\draw (3,0).. controls (3.3,3)..(3,6);\draw (4,-1).. controls (4.3,2)..(4,5);\draw (5,-2).. controls (5.3,2)..(5,4);
		\draw (6,0).. controls (6.3,3)..(6,6);\draw (7,-1).. controls (7.3,2)..(7,5);\draw (8,-2).. controls (8.3,2)..(8,4);
		\draw (0,0)..controls(1.5,-1)..  (2,-2)  ;\draw (3,0)..controls(4.5,-1)..  (5,-2);\draw (6,0)..controls(7.5,-1)..  (8,-2);
		\draw (0,3) ..controls (1.5,2)..(2,1) ;\draw (3,3) ..controls  (4.5,2)..(5,1) ;\draw (6,3) ..controls  (7.5,2)..(8,1) ;
		\draw (0,6) ..controls (1.5,5) .. (2,4);\draw (3,6) ..controls(4.5,5) .. (5,4);\draw (6,6) ..controls (7.5,5) .. (8,4);
		
		\draw (0,0)..controls(5.5,0.3)..(6,0);\draw (0,3)..controls(5.5,3.4)..(6,3);\draw (0,6)..controls(5.5,6.3)..(6,6);
		\draw (1,-1)..controls(5.5,-0.7)..(7,-1);\draw (1,2)..controls(5.5,2.5)..(7,2);\draw (1,5)..controls(5.5,5.5)..(7,5);
		\draw (2,-2)..controls(5.5,-1.6)..(8,-2);\draw (2,1)..controls(5.5,1.6)..(8,1);\draw (2,4)..controls(5.5,4.6)..(8,4);
		
		\draw [fill](0,0) circle [radius=0.1];\draw[fill=white]  (0,3) circle [radius=0.1];\draw [fill=white](0,6) circle [radius=0.1];
		
		\draw (0,-0.2)node[left]{\tiny$(0,0,0)$};\draw (0,2.8)node[left]{\tiny$e_3^1=(0,0,1)$};\draw (0,5.8)node[left]{\tiny$e_3^2=(0,0,2)$};
		\draw[fill=white] (1,-1) circle [radius=0.1];\draw [fill] (1,2) circle [radius=0.1];\draw [fill=white] (1,5) circle [radius=0.1];
		
		\draw (1,-1.2)node[left]{\tiny$e_1^1=(1,0,0)$};\draw (1,1.8)node[left]{\tiny$(1,0,1)$};\draw (1,4.8)node[left]{\tiny$(1,0,2)$};
		\draw [fill=white](2,-2) circle [radius=0.1];\draw [fill=white] (2,1) circle [radius=0.1];\draw [fill](2,4) circle [radius=0.1];
		
		\draw (2,-2.2)node[left]{\tiny$e_1^2=(2,0,0)$};\draw (2,0.8)node[left]{\tiny$(2,0,1)$};\draw (2,3.8)node[left]{\tiny$(2,0,2)$};
		\draw [fill=white] (3,0) circle [radius=0.1];\draw  [fill] (3,3)circle [radius=0.1];\draw [fill=white] (3,6)circle [radius=0.1];
		
		\draw (3,-0.2)node[left]{\tiny$(0,1,0)$};\draw (3,2.8)node[left]{\tiny$(0,1,1)$};\draw (3,5.8)node[left]{\tiny$(0,1,2)$};
		\draw (3,.3)node[left]{\tiny$e_2^1$};\draw (3,6.4)node[left]{\tiny$e_2^1+e_3^2$};
		\draw [fill=white](4,-1) circle [radius=0.1];\draw[fill=white] (4,2) circle [radius=0.1];\draw  [fill](4,5) circle [radius=0.1];
		
		\draw (4,-1.2)node[left]{\tiny$(1,1,0)$};\draw (4,1.8)node[left]{\tiny$(1,1,1)$};\draw (4,4.8)node[left]{\tiny$(1,1,2)$};
		\draw [fill](5,-2) circle [radius=0.1];\draw [fill=white] (5,1) circle [radius=0.1];\draw [fill=white] (5,4) circle [radius=0.1];
		
		\draw (5,-2.2)node[left]{\tiny$(2,1,0)$};\draw (5,0.8)node[left]{\tiny$(2,1,1)$};\draw (5,3.8)node[left]{\tiny$(2,1,2)$};
		\draw  [fill=white] (6,0) circle [radius=0.1];\draw[fill=white]  (6,3) circle [radius=0.1];\draw  [fill](6,6) circle [radius=0.1];
		
		\draw (6,-0.2)node[left]{\tiny$(0,2,0)$};\draw (6,2.8)node[left]{\tiny$(0,2,1)$};\draw (6,5.8)node[left]{\tiny$(0,2,2)$};
		\draw (6,.3)node[left]{\small$e_2^2$};
		\draw [fill](7,-1) circle [radius=0.1];\draw [fill=white] (7,2) circle [radius=0.1];\draw [fill=white] (7,5) circle [radius=0.1];
		
		\draw (7,-1.2)node[left]{\tiny$(1,2,0)$};\draw (7,1.8)node[left]{\tiny$(1,2,1)$};\draw (7,4.8)node[left]{\tiny$(1,2,2)$};
		\draw [fill=white](8,-2) circle [radius=0.1];\draw  [fill](8,1) circle [radius=0.1];\draw [fill=white] (8,4) circle [radius=0.1];
		
		\draw (8,-2.2)node[left]{\tiny$(2,2,0)$};\draw (8,0.8)node[left]{\tiny$(2,2,1)$};\draw (8,3.8)node[left]{\tiny$(2,2,2)$};

		\draw [thick] (4,-3) node[below]{Figure 2: A group (linear) identifying code in the Hamming graph $ K_3^3$ } ;\draw [thick] (4,-3.5) node[below]{(filled circles are codewords)} ;
		
	\end{tikzpicture}
\end{center}
	
	The \textit{open neighborhood} of a vertex \( u \in V(G) \) is given by:  
	\[ N(u) = \{ u + e_i^j : 1 \leq i \leq n, 1 \leq j \leq m_i - 1 \}. \]
	
	For a set \( C \subseteq V(G) \) and a vertex \( u \notin C \), the \textit{coset} of \( C \) with respect to \( u \) is  
	\[ u + C = \{ u + c : c \in C \}, \]
	and the \textit{distance from \( u \) to \( C \)} is  
	\[ d(u, C) = \min \{ d(u, c) : c \in C \}. \]
	
	If \( C_1 \) and \( C_2 \) are two sets, their \textit{direct sum} is defined as:  
	\[ C_1 \oplus C_2 = \{ (c_1, c_2) : c_1 \in C_1, c_2 \in C_2 \}. \]

	The following lemma was first proven by Junnila et al. \cite{ju} for the Hamming graph \( G = K_q^n \), where \( q \) is a prime power greater than 2 and \( n > 3 \). We have found that the result also holds for the more general graph \( G = K_{m_1} \square K_{m_2} \square \cdots \square K_{m_n} \), where each \( m_i \geq 2 \). However, since the proof is different in this case, we provide it here.

	\begin{lem}\label{2tm} 
	Let \( G = K_{m_1} \square K_{m_2} \square \cdots \square K_{m_n} \) for \( n \geq 2 \), and let \( u, v \in G \). Then:
	
	\[
	|N[u] \cap N[v]| = 
	\begin{cases}
		m_i & \text{if } d(u, v) = 1 \text{ and } u, v \text{ differ at the } i^{th} \text{ coordinate}, \\
		2 & \text{if } d(u, v) = 2, \\
		0 & \text{if } d(u, v) > 2.
	\end{cases}
	\]\end{lem}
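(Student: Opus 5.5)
Since $N[u]$ is the set of vertices within Hamming distance at most $1$ of $u$, the set $N[u]\cap N[v]$ consists of the vertices $w$ with $d(u,w)\le 1$ and $d(v,w)\le 1$. By the triangle inequality such a $w$ forces $d(u,v)\le 2$. This settles the case $d(u,v)>2$ at once: the intersection is empty. For the other two cases I will describe $w$ coordinate by coordinate. The plan is to use the set $D=\{k : u_k\neq v_k\}$ of coordinates where $u$ and $v$ differ, together with the fact that $w$ may differ from $u$ in at most one coordinate and from $v$ in at most one coordinate.

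\emph{Case $d(u,v)=1$, with $D=\{i\}$.} I claim that $N[u]\cap N[v]=\{w : w_k=u_k \text{ for all } k\neq i\}$, which is the set of all $m_i$ vertices of the $K_{m_i}$-fibre through $u$ in direction $i$. Every such $w$ differs from both $u$ and $v$ in at most coordinate $i$, so it lies in the intersection. Conversely, suppose $w$ is in the intersection and $w_k\neq u_k$ for some $k\neq i$. Then $u$ and $w$ differ exactly at $k$, so $w_i=u_i\neq v_i$ while $w_k\neq u_k=v_k$. Hence $d(v,w)=2$, a contradiction. So the count is exactly $m_i$.

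\emph{Case $d(u,v)=2$, with $D=\{i,j\}$.} Take $w$ in the intersection. If $w$ differs from $u$ at some coordinate outside $D$, then $w$ still agrees with $u$ on $D$, so $w$ differs from $v$ at both $i$ and $j$ as well as at that extra coordinate, which is impossible. Hence $w$ agrees with $u$ (and $v$) off $D$, and only the pair $(w_i,w_j)$ remains to be determined. I will check the following:
\begin{itemize}
\item If $(w_i,w_j)=(u_i,u_j)$ or $(v_i,v_j)$, then $w=u$ or $w=v$, which is at distance $2$ from the other vertex. So these are excluded.
\item Otherwise, $d(u,w)\le 1$ forces $w$ to match $u$ at one of $i,j$, and $d(v,w)\le 1$ forces $w$ to match $v$ at one of $i,j$. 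Since $u_i\neq v_i$ and $u_j\neq v_j$, $w$ must match $u$ at one index and $v$ at the other.
\end{itemize}
This leaves exactly the two vertices $(u_i,v_j)$ and $(v_i,u_j)$ in positions $i,j$, and both do lie in the intersection. They are distinct because $u_i\neq v_i$, so the count is $2$.

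There is no real obstacle here. The only step needing care is the exhaustive case check in the distance-$2$ case, namely ruling out coordinates outside $D$ and excluding $u$ and $v$ themselves. Note also that the argument uses no group structure, which explains why it extends beyond prime powers to arbitrary $m_i\ge 2$.
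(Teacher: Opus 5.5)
Your proof is correct and follows essentially the same route as the paper. Both split on $d(u,v)$. For $d(u,v)>2$, both note that a common closed neighbour would force $d(u,v)\le 2$, which you phrase via the triangle inequality, and for the other two cases both identify the common neighbours explicitly. Your coordinate-wise analysis via the difference set $D$ is in fact more careful than the paper's, which simply asserts that $u+e_{i_1}^{j_1}$ and $u+e_{i_2}^{j_2}$ are the only common neighbours in the distance-$2$ case.
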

	
	\begin{proof}  
	- If \( d(u, v) = 1 \), and they differ only at the \( i^{th} \) coordinate, then \( v = u + e_i^j \) for some \( 1 \leq j \leq m_i - 1 \). For any \( s \neq i \), \( u + e_s^t \) (with \( t \neq 0 \)) is at distance 2 from \( v \), so it cannot be in both neighborhoods. Hence,  
	\[ N[u] \cap N[v] = \{ u + e_i^j : 0 \leq j \leq m_i - 1 \}, \]
	and the size of the intersection is \( m_i \).
	
	- If \( d(u, v) = 2 \), then \( v = u + e_{i_1}^{j_1} + e_{i_2}^{j_2} \) for distinct coordinates \( i_1 \neq i_2 \). Then, the only common neighbors of \( u \) and \( v \) are \( u + e_{i_1}^{j_1} \) and \( u + e_{i_2}^{j_2} \). So the intersection has size 2.
	
	- If \( d(u, v) > 2 \), assume for contradiction that there is a vertex \( w \in N[u] \cap N[v] \). Then,  
	\[ u = w + e_{i_1}^{j_1}, \quad v = w + e_{i_2}^{j_2}, \]
	with \( i_1 \neq i_2 \) and \( j_1, j_2 \neq 0 \). This would imply \( d(u, v) = d(e_{i_1}^{j_1}, e_{i_2}^{j_2}) = 2 \), which contradicts \( d(u, v) > 2 \). So,  
	\[ N[u] \cap N[v] = \emptyset. \]
	
\end{proof}

\section{Group identifying codes in $K_{m_1}\square K_{m_2}\square \cdots \square K_{m_n}$} 
	
	In this section, we study \textit{group identifying codes} in the graph \( G = K_{m_1} \square K_{m_2} \square \cdots \square K_{m_n} \) for \( n \geq 2 \). In \cite{go} and \cite{gr}, the value of \( \gamma^{ID}(G) \) was determined for \( n = 2 \), which serves as a lower bound for \( \gamma^{GID}(G) \) in that case. For \( n \geq 3 \), we establish a new lower bound for \( \gamma^{GID}(G) \).
	
	It is also worth noting that the full vertex set \( V(G) \) forms a group identifying code in \( G \) for any \( n \geq 2 \). This provides a natural upper bound for \( \gamma^{GID}(G) \), which may be useful in further analysis.

	Before proving the main theorem of this section, we first establish some necessary and sufficient conditions that will be used in its proof.

	A lemma similar to the one below was proven by Ranto \cite{ra} for the graph \( G = K_2^n \), where \( n \geq 2 \). We have found that the result also holds for the  graph \( G = K_{m_1} \square K_{m_2} \square \cdots \square K_{m_n} \), with \( n \geq 2 \). Since the reasoning is very similar, we omit the proof here.

	\begin{lem}\label{2lm1}  
	Let \( n \geq 2 \). If \( C \) is a group code in the graph  
	\[ G = K_{m_1} \square K_{m_2} \square \cdots \square K_{m_n}, \]  
	and \( c \in C \), then for any \( u \in V(G) \), we have  
	\[ J_C(u + c) = J_C(u) + c. \]
\end{lem}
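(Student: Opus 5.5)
The plan is to prove the set equality $J_C(u+c) = J_C(u) + c$ by two inclusions. Both rest on two facts: translation by a fixed vertex is a graph automorphism of $G$, and $C$ is closed under addition and subtraction because it is a subgroup of $\Z_{m_1}\times\cdots\times\Z_{m_n}$.

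The first step is to record that translation preserves distance. For any $x,y,c \in V(G)$ we have $d(x+c,y+c) = wt\bigl((x+c)-(y+c)\bigr) = wt(x-y) = d(x,y)$, using the remark in the introduction that Hamming distance is the weight of the difference. Consequently $N[u+c] = N[u] + c$. This follows directly from the description $N[u]=\{u+e_i^j : 1\le i\le n,\ 0\le j\le m_i-1\}$, since $(u+e_i^j)+c = (u+c)+e_i^j$.

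For the inclusion $J_C(u)+c \subseteq J_C(u+c)$, take $x \in J_C(u)$. Then $x \in C$ and $x \in N[u]$. Hence $x+c \in C$ because $C$ is a group, and $x+c \in N[u]+c = N[u+c]$. Therefore $x+c \in J_C(u+c)$.

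For the reverse inclusion, take $y \in J_C(u+c)$ and set $x = y - c$. Since $-c \in C$ and $C$ is closed under addition, $x \in C$. Also $x \in N[u+c] - c = N[u]$, so $x \in J_C(u)$ and $y = x+c \in J_C(u)+c$. Equivalently, one could apply the first inclusion to the vertex $u+c$ and the codeword $-c$.

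There is no real obstacle here. The only point needing care is that closure of $C$ under inverses is used essentially. This is exactly where the group hypothesis is needed, and it is why the statement fails for arbitrary codes.
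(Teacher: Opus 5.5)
Your proof is correct and is essentially the standard translation argument; the paper omits its proof, saying it follows Ranto's, and that argument amounts to $J_C(u+c)=N[u+c]\cap C=(N[u]+c)\cap(C+c)=(N[u]\cap C)+c$. One small quibble with your closing remark: the group hypothesis is already needed in your first inclusion (closure under addition gives $x+c\in C$), so the real requirement is $C+c=C$, not just closure under inverses.
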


	Before proving the sufficient condition, we first need to prove the following lemma.
	
	\begin{lem}\label{2lm2}  
	Let \( n \geq 2 \). Suppose \( C \) is a group code in the graph  
	\[ G = K_{m_1} \square K_{m_2} \square \cdots \square K_{m_n}, \]  
	and let \( u, v \in V(G) \). If either \( u \) and \( v \) are in the same coset of \( C \), or \( u + v \in C \), then  
	\[ |J_C(u)| = |J_C(v)|. \]
	\end{lem}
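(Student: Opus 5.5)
We split into the two hypotheses and in each case exhibit an explicit bijection between $J_C(u)$ and $J_C(v)$.

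\emph{Case 1: $u$ and $v$ lie in the same coset of $C$.} Then $v = u + c$ for some $c \in C$. By Lemma~\ref{2lm1}, $J_C(v) = J_C(u+c) = J_C(u) + c$. Translation $x \mapsto x + c$ is injective on $V(G)$, so $|J_C(v)| = |J_C(u)|$.

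\emph{Case 2: $u + v \in C$.} Here the key observation is that negation $x \mapsto -x$ is a graph automorphism of $G$. Indeed, $d(-x,-y) = wt(-x+y) = wt(x-y) = d(x,y)$, since a coordinate of $-(x-y)$ is zero exactly when the corresponding coordinate of $x-y$ is zero. Consequently $N[-u] = -N[u]$. Because $C$ is a subgroup, $-C = C$, and therefore
\[
J_C(-u) = N[-u]\cap C = (-N[u])\cap(-C) = -(N[u]\cap C) = -J_C(u),
\]
so $|J_C(-u)| = |J_C(u)|$.

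Now write $c = u+v \in C$, so $v = -u + c$. Thus $v$ lies in the same coset of $C$ as $-u$. Applying Case 1 to the pair $-u, v$ gives $|J_C(v)| = |J_C(-u)| = |J_C(u)|$, which is the claim.

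The only step needing care is verifying that negation preserves closed neighbourhoods. This holds because Hamming weight is invariant under negation in each $\mathbb{Z}_{m_i}$. Everything else follows directly from Lemma~\ref{2lm1} and the closure of $C$ under inverses.
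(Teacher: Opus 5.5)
Your proof is correct and takes essentially the same approach as the paper: Case 1 uses the translation identity of Lemma~\ref{2lm1}, and Case 2 reduces to Case 1 by observing that negation preserves closed neighbourhoods and maps $C$ onto itself. The only differences are cosmetic. The paper negates $v$ rather than $u$, and it checks the neighbourhood correspondence elementwise via $-e_i^j=e_i^{m_i-j}$, whereas you state it as the set identity $J_C(-u)=-J_C(u)$.
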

\begin{proof}  
	It is a known fact that two vertices \( u \) and \( v \) are in the same coset of \( C \), say \( C + x \), if and only if we can write \( u = c_1 + x \) and \( v = c_2 + x \), for some \( c_1, c_2 \in C \). This is equivalent to saying that \( u - v = c_1 - c_2 \in C \).
	
\textbf{	Case 1:} Suppose \( u \) and \( v \) are in the same coset of \( C \).  
	Then \( u - v \in C \).  
	By Lemma \ref{2lm1}, we have:  
	\[
	J_C(u) = J_C(v + (u - v)) = J_C(v) + (u - v).
	\]  
	So, \( |J_C(u)| = |J_C(v)| \).
	
	\textbf{Case 2:} Suppose \( u + v \in C \).  
	Then \( u \) and \( -v \) are in the same coset of \( C \), since \( u - (-v) = u + v \in C \).  
	By Case 1, it follows that \( |J_C(u)| = |J_C(-v)| \).
	
	Since \( C \) is a group code, for every \( c \in C \), the element \( -c \) is also in \( C \).  
	Now, take any \( w \in J_C(v) \). Then \( w = v + e_i^j \) for some \( i, j \).  
	So,  
	\[
	-w = -v - e_i^j = -v + e_i^{(m_i - j)}.
	\]  
	This means \( w \in J_C(v) \) if and only if \( -w \in J_C(-v) \).  
	Thus, \( |J_C(v)| = |J_C(-v)| \), and hence  
	\[
	|J_C(u)| = |J_C(v)|.
	\]
\end{proof}

	Now, we give a sufficient condition for a subset of the vertex set \( V(G) \) to be an identifying code in \( G \). This condition is helpful in proving the existence of a group identifying code in \( G \), although it is not a necessary condition (see Figure 1(a) for a counterexample).
	
\begin{prop}\label{2lm3}  
	Let \( C \subseteq V(G) \), where  
	\[
	G = K_{m_1} \square K_{m_2} \square \cdots \square K_{m_n}, \quad n \geq 2, \quad m_i \geq 2 \text{ for all } 1 \leq i \leq n.
	\]  
	If for every vertex \( u \in V(G) \), the set \( J_C(u) \) satisfies \( |J_C(u)| \geq 3 \), and there exist two distinct indices \( i_1, i_2 \in \{1, \dots, n\} \) such that  
	\[
	\{u + e_{i_1}^{j_1}, u + e_{i_2}^{j_2}\} \subseteq C
	\]  
	for some \( 1 \leq j_1 \leq m_{i_1} - 1 \) and \( 1 \leq j_2 \leq m_{i_2} - 1 \), then \( C \) is an identifying code in \( G \).
	
\end{prop}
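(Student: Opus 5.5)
Domination is immediate, since $|J_C(u)|\ge 3>0$ for every $u$. What remains is separation: for distinct $u,v\in V(G)$ we must show $J_C(u)\neq J_C(v)$. Suppose instead that $J_C(u)=J_C(v)$. Then $J_C(u)\subseteq N[u]\cap N[v]\cap C$, so $|N[u]\cap N[v]|\ge |J_C(u)|\ge 3$. We split according to $d(u,v)$ and use Lemma \ref{2tm}.

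\textbf{Case $d(u,v)\ge 2$.} By Lemma \ref{2tm}, $|N[u]\cap N[v]|\le 2$. This contradicts $|J_C(u)|\ge 3$.

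\textbf{Case $d(u,v)=1$.} Say $u$ and $v$ differ only in coordinate $i$. By Lemma \ref{2tm} (more precisely, its proof), $N[u]\cap N[v]=\{u+e_i^j:0\le j\le m_i-1\}$. This is the line through $u$ in direction $i$, and every vertex on it agrees with $u$ outside coordinate $i$. The hypothesis provides two codewords $u+e_{i_1}^{j_1}$ and $u+e_{i_2}^{j_2}$ in $J_C(u)$ with $i_1\neq i_2$ and $j_1,j_2\neq 0$, so at least one index, say $i_1$, differs from $i$. The codeword $u+e_{i_1}^{j_1}$ lies in $J_C(u)$ but differs from $u$ in coordinate $i_1\neq i$, so it is not on the line through $u$ in direction $i$. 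Hence it is not in $N[u]\cap N[v]$, and therefore not in $J_C(v)$. This contradicts $J_C(u)=J_C(v)$.

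So every pair of distinct vertices is separated, and $C$ is identifying. The only step needing care is the adjacent case, where $m_i$ may be large enough that the counting bound alone does not rule out equality. That is why the hypothesis of two codewords in different coordinate directions is needed: the adjacent case rests on the explicit description of $N[u]\cap N[v]$ as a line in direction $i$, rather than on its size.
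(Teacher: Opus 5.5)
Your proof is correct and follows essentially the paper's argument. Like the paper, you split on $d(u,v)$ via Lemma~\ref{2tm}: for adjacent vertices you use the codeword in a coordinate direction different from the one in which $u$ and $v$ differ, and for $d(u,v)\ge 2$ you play $|J_C(u)|\ge 3$ against the at most two common closed neighbours. Your observation that $J_C(u)=J_C(v)$ forces $J_C(u)\subseteq N[u]\cap N[v]$ just merges the paper's separate distance-$2$ and distance-$>2$ cases, including its sub-cases on whether $u$ or $v$ is a codeword, into one counting step.
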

\begin{proof}  
	Since each vertex is adjacent to at least three codewords, \( C \) is a dominating set.
	
	To prove that \( C \) is an identifying code, we need to show that for any two distinct vertices \( u, v \in V(G) \), the sets \( J_C(u) \) and \( J_C(v) \) are different.
	
	We consider three cases based on the distance between \( u \) and \( v \).
	
	---
	
\textbf{	Case 1: \( d(u, v) = 1 \)}  
	Then \( v = u + e_{i_1}^{j_1} \) for some \( i_1 \) and \( j_1 \ne 0 \).  
	By assumption, there is another index \( i_2 \ne i_1 \) and \( j_2 \ne 0 \) such that \( u + e_{i_2}^{j_2} \in C \).  
	
	Now, the distance between \( v \) and this codeword is:
	\[
	d(v, u + e_{i_2}^{j_2}) = 2,
	\]  
	so this codeword is in \( J_C(u) \), but not in \( J_C(v) \).  
	Hence, \( J_C(u) \ne J_C(v) \).
	
	---
	
	\textbf{Case 2: \( d(u, v) = 2 \)}  
	If either \( u \in C \) or \( v \in C \), then one of them appears in its own closed neighborhood but not in the other’s, so \( J_C(u) \ne J_C(v) \).
	
	Now suppose both \( u, v \notin C \). Then:
	\[
	v = u + e_{i_1}^{j_1} + e_{i_2}^{j_2}, \quad \text{with } i_1 \ne i_2, \, j_1, j_2 \ne 0.
	\]  
	From Lemma \ref{2tm}, we know:
	\[
	N[u] \cap N[v] = \{u + e_{i_1}^{j_1}, u + e_{i_2}^{j_2}\}.
	\]  
	By assumption, there exists a third index \( i_3 \notin \{i_1, i_2\} \) and \( j_3 \ne 0 \) such that \( u + e_{i_3}^{j_3} \in C \).
	
	Then, the distance between this codeword and \( v \) is 3, so it lies in \( J_C(u) \) but not in \( J_C(v) \). Therefore, \( J_C(u) \ne J_C(v) \).
	
	If instead \( i_3 = i_1 \) or \( i_3 = i_2 \), but \( j_3 \ne j_1 \) or \( j_3 \ne j_2 \), then again the distance to \( v \) will be 2, so this codeword lies in \( J_C(u) \) but not in \( J_C(v) \). So again, \( J_C(u) \ne J_C(v) \).
	
	---
	
	\textbf{Case 3: \( d(u, v) > 2 \)}  
	By Lemma \ref{2tm}, \( N[u] \cap N[v] = \emptyset \), so their identifying sets are completely disjoint.  
	Clearly, \( J_C(u) \ne J_C(v) \).

	This completes the proof that \( C \) is an identifying code in \( G \).
\end{proof}

	 Using the above proposition, it is now straightforward to show that there exists a group identifying code in the graph  
	\[
	G = K_{m_1} \square K_{m_2} \square \cdots \square K_{m_n}.
	\]

\begin{cor}\label{2lm4} 
If \( n \geq 2 \) and \( m_i \geq 2 \) for all \( 1 \leq i \leq n \), then there exists a group identifying code in the graph  
\[
G = K_{m_1} \square K_{m_2} \square \cdots \square K_{m_n}.
\].\end{cor}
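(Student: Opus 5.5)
The plan is to take the whole vertex set \( C = V(G) = \mathbb{Z}_{m_1} \times \cdots \times \mathbb{Z}_{m_n} \) as the code. As noted at the start of this section, this is trivially a subgroup. So the only thing to check is that it is an identifying code, and for that I will apply Proposition \ref{2lm3} directly.

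With \( C = V(G) \) we have \( J_C(u) = N[u] \) for every vertex \( u \). Its size is \( |N[u]| = 1 + \sum_{i=1}^n (m_i - 1) \). Since \( n \geq 2 \) and every \( m_i \geq 2 \), this is at least \( 1 + 1 + 1 = 3 \), so the condition \( |J_C(u)| \geq 3 \) holds.

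For the second hypothesis of Proposition \ref{2lm3}, choose the indices \( i_1 = 1 \) and \( i_2 = 2 \), which are distinct because \( n \geq 2 \). Take \( j_1 = j_2 = 1 \); this is allowed because \( m_1, m_2 \geq 2 \). Then \( u + e_1^1 \) and \( u + e_2^1 \) are vertices of \( G \), so both lie in \( C = V(G) \).

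Proposition \ref{2lm3} therefore shows that \( V(G) \) is an identifying code. Being a subgroup, it is a group identifying code, which proves the corollary.

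I expect no real obstacle here. The only point needing care is the role of the hypotheses: \( n \geq 2 \) is what supplies two distinct coordinate directions, and \( m_i \geq 2 \) is what makes the vectors \( e_i^1 \) nonzero, and together they give both conditions of Proposition \ref{2lm3}. If one wanted to avoid Proposition \ref{2lm3}, the same conclusion follows from twin-freeness. For distinct non-adjacent \( u, v \), the vertex \( u \) lies in \( N[u] \setminus N[v] \). For adjacent \( u, v \) differing in coordinate \( i \), the vertex \( u + e_s^1 \) with \( s \neq i \) lies in \( N[u] \setminus N[v] \), by Lemma \ref{2tm}.
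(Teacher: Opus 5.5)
Your proof is correct and matches the paper's: take \(C = V(G)\), which is trivially a subgroup, and apply Proposition~\ref{2lm3}. You also check the proposition's two hypotheses explicitly, namely \(|N[u]| = 1+\sum_{i}(m_i-1) \ge 3\) and \(u+e_1^1,\, u+e_2^1 \in C\), which the paper leaves implicit.
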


\begin{proof}  
The vertex set \( V(G) \) forms a group under addition.  
By Proposition \(\ref{2lm3}\), if \( n \geq 2 \), then \( V(G) \) is an identifying code in \( G \).  
Since \( V(G) \) is both a group and an identifying code, it is a group identifying code in \( G \) for \( n \geq 2 \).  
\end{proof}

Recall that a subgroup \( H \) of a group \( L \) is said to be proper if \( H \neq \{ \mathbf{0} \} \) and \( H \neq L \).

We now show that the graph  
\[
G = K_{m_1} \square K_{m_2}
\]  
admits no identifying code that is a proper subgroup of the vertex set, when \( m_1, m_2 \geq 2 \) and \( \gcd(m_1, m_2) = 1 \); that is, when \( m_1 \) and \( m_2 \) are relatively prime.

\begin{prop}
	There is no proper group identifying code in the graph \( G = K_{m_1} \square K_{m_2} \) when \( m_1 \geq 2 \) and \( m_2 \geq 2 \) are relatively prime integers.
\end{prop}

\begin{proof}
	The vertex set of \( G \) is given by \( V(G) = \mathbb{Z}_{m_1} \times \mathbb{Z}_{m_2} \), which is isomorphic to the cyclic group \( \mathbb{Z}_{m_1 m_2} \) when \( \gcd(m_1, m_2) = 1 \). Since \( \mathbb{Z}_{m_1 m_2} \) is a finite cyclic group of order \( m_1 m_2 \), it has a unique subgroup of order \( s \) for each divisor \( s \) of \( m_1 m_2 \), and every subgroup of a cyclic group is itself cyclic.
	
	Let \( H \leq V(G) \) be a subgroup generated by an element \( (s_1, s_2) \in \mathbb{Z}_{m_1} \times \mathbb{Z}_{m_2} \), where each \( s_i \) divides \( m_i \) for \( i = 1, 2 \). Since \( m_1 \) and \( m_2 \) are relatively prime, any nonzero divisors \( s_1, s_2 \) must also be relatively prime.
	
	We now analyze the structure of \( H \) based on the generator \( (s_1, s_2) \), and determine whether \( H \) forms an identifying code.
	
	\medskip
	\noindent
	\textbf{Case 1:} \( (s_1, s_2) = (0, 0) \).  
	Then \( H = \{ \mathbf{0} \} \), which is not a dominating set and hence not an identifying code.
	
	\medskip
	\noindent
	\textbf{Case 2:} \( (s_1, s_2) = (1, 0) \).  
	Then \( H = \mathbb{Z}_{m_1} \times \{0\} \). In this case, for all \( i \in \mathbb{Z}_{m_1} \),
	\[
	J_H((i, 0)) = \{(0,0), (1,0), \ldots, (m_1 - 1, 0)\},
	\]
	so all vertices in the row \( \mathbb{Z}_{m_1} \times \{0\} \) share the same identifying set. Thus, \( H \) is not separating and hence not an identifying code (see Figure~3(a)).
	
	\medskip
	\noindent
	\textbf{Case 3:} \( (s_1, s_2) = (1,1) \).  
	Then \( H = \mathbb{Z}_{m_1} \times \mathbb{Z}_{m_2} = V(G) \). This is the trivial case where the entire vertex set is the code, which is identifying by Corollary~\ref{2lm4}, but it is not a proper subgroup.
	
	\medskip
	\noindent
	\textbf{Case 4:} \( (s_1, s_2) = (1, s_2) \), where \( s_2 \in \mathbb{Z}_{m_2} \setminus \{0,1\} \) is a proper divisor of \( m_2 \).  
	Then \( H = \mathbb{Z}_{m_1} \times \mathbb{Z}_{s_2} \). In this case, the layers \( \mathbb{Z}_{m_1} \times \{1\} \) and \( \mathbb{Z}_{m_1} \times \{m_2 - 1\} \) are disjoint from \( H \), so vertices such as \( (0,1) \) and \( (0, m_2 - 1) \) satisfy
	\[
	J_H((0,1)) = J_H((0, m_2 - 1)),
	\]
	which implies \( H \) is not separating and hence not an identifying code (see Figure~3(b)).
		\begin{center}
		\begin{tikzpicture}
			\draw(0,0)--(1,0);\draw(0,1)--(1,1);\draw(0,2)--(1,2);
			\draw(0,0)--(0,2);\draw(1,0)--(1,2);
			\draw (0,0).. controls (-0.3,1)..(0,2);\draw (1,0).. controls (1.3,1)..(1,2);
			\draw[fill](0,0)circle[radius=0.1];\draw[fill](0,1)circle[radius=0.1];\draw[fill](0,2)circle[radius=0.1];
			\draw[fill=white](1,0)circle[radius=0.1];\draw[fill=white](1,1)circle[radius=0.1];\draw[fill=white](1,2)circle[radius=0.1];
			\draw [thick] (0,-1) node[below]{Figure 3(a): A subgroup } ;
			\draw [thick] (0,-1.7) node[below]{$H=\Z_3\times \{\bm 0\}$} ;

			\draw(5,0)--(5,2);\draw(6,0)--(6,2);\draw(7,0)--(7,2);\draw(8,0)--(8,2);
			\draw(5,0)--(8,0);\draw(5,1)--(8,1);\draw(5,2)--(8,2);
			\draw(5,0).. controls (6.5,0.3)..(8,0);\draw(5,1).. controls (6.5,1.3)..(8,1);\draw(5,2).. controls (6.5,2.3)..(8,2);
			\draw(5,0).. controls (5.5,-0.3)..(7,0);\draw(5,1).. controls (5.5,0.7)..(7,1);\draw(5,2).. controls (5.5,1.7)..(7,2);
			\draw(6,0).. controls (7.5,-0.3)..(8,0);\draw(6,1).. controls (7.5,0.7)..(8,1);\draw(6,2).. controls (7.5,1.7)..(8,2);
			\draw(5,0).. controls (4.7,1)..(5,2);\draw(6,0).. controls (5.7,1)..(6,2);\draw(7,0).. controls (6.7,1)..(7,2);\draw(8,0).. controls (7.7,1)..(8,2);
			\draw[fill](5,0)circle[radius=0.1];\draw[fill](5,1)circle[radius=0.1];\draw[fill](5,2)circle[radius=0.1];
			\draw[fill=white](6,0)circle[radius=0.1];\draw[fill=white](6,1)circle[radius=0.1];\draw[fill=white](6,2)circle[radius=0.1];
			\draw[fill](7,0)circle[radius=0.1];\draw[fill](7,1)circle[radius=0.1];\draw[fill](7,2)circle[radius=0.1];
			\draw[fill=white](8,0)circle[radius=0.1];\draw[fill=white](8,1)circle[radius=0.1];\draw[fill=white](8,2)circle[radius=0.1];
			
			\draw [thick] (6.5,-1) node[below]{Figure 3(b): A subgroup } ;
			\draw [thick] (6.5,-1.7) node[below]{$H=\Z_3\times \Z_2$} ;
			\draw [thick] (3,-3) node[below]{(filled circles are codewords)} ;
		\end{tikzpicture}
	\end{center}
	
%	\medskip
	\noindent
	\textbf{Case 5:} \( s_1, s_2 \notin \{0,1\} \), i.e., both \( s_1 \) and \( s_2 \) are proper divisors of \( m_1 \) and \( m_2 \), respectively.  
	Then \( H = \mathbb{Z}_{s_1} \times \mathbb{Z}_{s_2} \). Note that \( (0,1) \notin H \), and indeed, the entire layer \( \mathbb{Z}_{m_1} \times \{1\} \) is disjoint from \( H \). Similarly, the layer \( \{1\} \times \mathbb{Z}_{m_2} \) is also disjoint from \( H \). Therefore, the vertex \( (1,1) \) is not adjacent to any element of \( H \), and hence not dominated. Thus, \( H \) is not a dominating set and cannot be an identifying code (see Figure~4).
	\begin{center}
		\begin{tikzpicture}[scale=1]
			\draw(5,0)--(5,3);\draw(6,0)--(6,3);\draw(7,0)--(7,3);\draw(8,0)--(8,3);
			\draw(5,0)--(8,0);\draw(5,1)--(8,1);\draw(5,2)--(8,2);\draw(5,3)--(8,3);
			
			\draw(5,0).. controls (6.5,0.3)..(8,0);\draw(5,1).. controls (6.5,1.3)..(8,1);\draw(5,2).. controls (6.5,2.3)..(8,2);\draw(5,3).. controls (6.5,3.3)..(8,3);
			\draw(5,0).. controls (5.5,-0.3)..(7,0);\draw(5,1).. controls (5.5,0.7)..(7,1);\draw(5,2).. controls (5.5,1.7)..(7,2);\draw(5,3).. controls (5.5,2.7)..(7,3);
			\draw(6,0).. controls (7.5,-0.3)..(8,0);\draw(6,1).. controls (7.5,0.7)..(8,1);\draw(6,2).. controls (7.5,1.7)..(8,2);\draw(6,3).. controls (7.5,2.7)..(8,3);
			\draw(5,0).. controls (4.7,1)..(5,2);\draw(5,1).. controls (4.7,2)..(5,3);\draw(5,0).. controls (5.3,1.5)..(5,3);
			\draw(6,0).. controls (5.7,1)..(6,2);\draw(6,1).. controls (5.7,2)..(6,3);\draw(6,0).. controls (6.3,1.5)..(6,3);
			\draw(7,0).. controls (6.7,1)..(7,2);\draw(7,1).. controls (6.7,2)..(7,3);\draw(7,0).. controls (7.3,1.5)..(7,3);
			\draw(8,0).. controls (7.7,1)..(8,2);\draw(8,1).. controls (7.7,2)..(8,3);\draw(8,0).. controls (8.3,1.5)..(8,3);
			\draw[fill=white](5,0)circle[radius=0.1];\draw[fill](5,1)circle[radius=0.1];\draw[fill=white](5,2)circle[radius=0.1];\draw[fill](5,3)circle[radius=0.1];
			\draw[fill=white](6,0)circle[radius=0.1];\draw[fill=white](6,1)circle[radius=0.1];\draw[fill=white](6,2)circle[radius=0.1];\draw[fill=white](6,3)circle[radius=0.1];
			\draw[fill=white](7,0)circle[radius=0.1];\draw[fill](7,1)circle[radius=0.1];\draw[fill=white](7,2)circle[radius=0.1];\draw[fill](7,3)circle[radius=0.1];		
			\draw[fill=white](8,0)circle[radius=0.1];\draw[fill=white](8,1)circle[radius=0.1];\draw[fill=white](8,2)circle[radius=0.1];\draw[fill=white](8,3)circle[radius=0.1];
			
			\draw [thick] (6.5,-1) node[below]{Figure 4: A subgroup $H=\Z_2\times \Z_2$  } ;
			
			\draw [thick] (6.5,-1.7) node[below]{(filled circles are codewords)} ;
		\end{tikzpicture}
	\end{center}
	
	\medskip
	\noindent
	In all cases, the only subgroup that forms an identifying code is the full vertex set \( V(G) \), which is not proper. Therefore, there is no proper subgroup of \( V(G) \) that is an identifying code in \( G \).
	
\end{proof}

When \( m_1 \) and \( m_2 \) are not relatively prime, we observe that no general conclusion can be drawn about the existence or size of group identifying codes in the Cartesian product \( K_{m_1} \square K_{m_2} \).  
In this section, we provide examples that illustrate the existence—and nonexistence—of proper group identifying codes in such cases.

%\begin{} Check here what to write
	A proper group identifying code in the graph \( K_4 \square K_4 \) is shown in Figure~5(a). By Proposition~\ref{2lm3}, this subgroup forms an identifying code in \( K_4 \square K_4 \).  
	On the other hand, although \( 2 \) and \( 4 \) are not relatively prime, the graph \( K_4 \square K_2 \) does not admit any proper group identifying code, as illustrated in Figure~5(b).
%\end{}
\begin{center}
	\begin{tikzpicture}[scale=0.9]
		\draw(0,0).. controls (1.5,0.3)..(3,0);\draw(0,1).. controls (1.5,1.3)..(3,1);\draw(0,2).. controls (1.5,2.3)..(3,2);\draw(0,3).. controls (1.5,3.3)..(3,3);
		\draw(0,0).. controls (0.5,-0.3)..(2,0);\draw(0,1).. controls (0.5,0.7)..(2,1);\draw(0,2).. controls (.5,1.7)..(2,2);\draw(0,3).. controls (.5,2.7)..(2,3);
		\draw(1,0).. controls (2.5,-0.3)..(3,0);\draw(1,1).. controls (2.5,0.7)..(3,1);\draw(1,2).. controls (2.5,1.7)..(3,2);\draw(1,3).. controls (2.5,2.7)..(3,3);
		\draw(0,0).. controls (-0.3,1)..(0,2);\draw(0,1).. controls (-0.3,2)..(0,3);\draw(0,0).. controls (.3,1.5)..(0,3);
		\draw(1,0).. controls (.7,1)..(1,2);\draw(1,1).. controls (.7,2)..(1,3);\draw(1,0).. controls (1.3,1.5)..(1,3);
		\draw(2,0).. controls (1.7,1)..(2,2);\draw(2,1).. controls (1.7,2)..(2,3);\draw(2,0).. controls (2.3,1.5)..(2,3);
		\draw(3,0).. controls (2.7,1)..(3,2);\draw(3,1).. controls (2.7,2)..(3,3);\draw(3,0).. controls (3.3,1.5)..(3,3);
		
		\draw(0,0)--(3,0);\draw(0,1)--(3,1);\draw(0,2)--(3,2);\draw(0,3)--(3,3);
		\draw(0,0)--(0,3);\draw(1,0)--(1,3);\draw(2,0)--(2,3);\draw(3,0)--(3,3);
		\draw[fill=white](0,0)circle[radius=0.1];\draw[fill](0,1)circle[radius=0.1];\draw[fill=white](0,2)circle[radius=0.1];\draw[fill](0,3)circle[radius=0.1];
		\draw[fill](1,0)circle[radius=0.1];\draw[fill=white](1,1)circle[radius=0.1];\draw[fill](1,2)circle[radius=0.1];\draw[fill=white](1,3)circle[radius=0.1];
		\draw[fill=white](2,0)circle[radius=0.1];\draw[fill](2,1)circle[radius=0.1];\draw[fill=white](2,2)circle[radius=0.1];\draw[fill](2,3)circle[radius=0.1];
		\draw[fill](3,0)circle[radius=0.1];\draw[fill=white](3,1)circle[radius=0.1];\draw[fill](3,2)circle[radius=0.1];\draw[fill=white](3,3)circle[radius=0.1];
		
		\draw [thick] (2,-1) node[below]{Figure 5(a): A proper group  } ;
		\draw [thick] (2,-1.7) node[below]{identifying code in $K_4\square K_4$ } ;
		\draw [thick] (6.5,-2.7) node[below]{(filled circles are codewords)} ;

		\draw[fill](10,0)circle[radius=0.1];\draw[fill](10,1)circle[radius=0.1];\draw[fill](10,2)circle[radius=0.1]; \draw[fill](10,3)circle[radius=0.1];		
		\draw[fill](11,0)circle[radius=0.1];\draw[fill](11,1)circle[radius=0.1];\draw[fill](11,2)circle[radius=0.1];\draw[fill](11,3)circle[radius=0.1];
		\draw(10,0)--(10,3);\draw(11,0)--(11,3);
		\draw(10,0)--(11,0);\draw(10,1)--(11,1);\draw(10,2)--(11,2);\draw(10,3)--(11,3);

		\draw(10,0).. controls (9.7,1)..(10,2);\draw(10,1).. controls (9.7,2)..(10,3);\draw(10,0).. controls (10.3,1.5)..(10,3);
		\draw(11,0).. controls (10.7,1)..(11,2);\draw(11,1).. controls (10.7,2)..(11,3);\draw(11,0).. controls (11.3,1.5)..(11,3);
		\draw [thick] (10,-1) node[below]{Figure 5(b): There is no proper} ;
		\draw [thick] (10,-1.7) node[below]{group identifying code in $K_4\square K_2$} ;
	\end{tikzpicture}
\end{center}

Now, we present another sufficient condition for a subset of \( V(G) \) to be an identifying code in \( G \). Note that this condition is not necessary (see Figure 1(a)).

\begin{lem}\label{2lm5}
	Let \( G = K_{m_1} \square K_{m_2} \square \cdots \square K_{m_n} \), where \( n \geq 3 \) and \( m_i \geq 2 \) for all \( 1 \leq i \leq n \). Let \( C \subseteq V(G) \) be a set of vertices such that the minimum pairwise distance between elements of \( C \) is at least 2. If for every vertex \( u \in V(G) \setminus C \), we have \( |J_C(u)| \geq 3 \), then \( C \) is an identifying code in \( G \).
\end{lem}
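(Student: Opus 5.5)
Domination is immediate: a codeword $c$ has $c\in J_C(c)$, and a non-codeword $u$ has $|J_C(u)|\ge 3$. For separation, take distinct $u,v\in V(G)$ and show $J_C(u)\neq J_C(v)$, splitting on how many of $u,v$ lie in $C$. The key consequence of minimum distance at least $2$ is that $C$ is independent: for $c\in C$ we get $J_C(c)=\{c\}$.

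\textbf{Case A: both $u,v\in C$.} Then $J_C(u)=\{u\}\neq\{v\}=J_C(v)$.

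\textbf{Case B: $u\in C$, $v\notin C$.} Then $|J_C(u)|=1<3\le |J_C(v)|$, so the sets differ.

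\textbf{Case C: both $u,v\notin C$.} This is the main step, and I will use Lemma~\ref{2tm}. If $d(u,v)>2$, then $N[u]\cap N[v]=\emptyset$ and both identifying sets are nonempty, so they differ.

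If $d(u,v)=2$, Lemma~\ref{2tm} gives $|N[u]\cap N[v]|=2$. Hence $J_C(u)\cap J_C(v)$ has at most two elements, while $|J_C(u)|\ge 3$, so some codeword lies in $J_C(u)\setminus J_C(v)$.

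If $d(u,v)=1$, say $u$ and $v$ differ in coordinate $i$, then $N[u]\cap N[v]$ is the line $L=\{u+e_i^j:0\le j\le m_i-1\}$. This set is a clique, so independence of $C$ gives $|C\cap L|\le 1$. Since $|J_C(u)|\ge 3$, at least two codewords in $J_C(u)$ lie outside $L$, hence outside $N[v]$. So $J_C(u)\neq J_C(v)$.

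The only real obstacle is the distance-one case, where the common neighbourhood can be large (size $m_i$). The resolution is that this common neighbourhood is a clique, so the minimum-distance hypothesis caps how many codewords it contains. Notice that the hypothesis $n\ge 3$ is not needed for this argument beyond guaranteeing the condition $|J_C(u)|\ge 3$ can hold; I will just remark on that rather than use it.
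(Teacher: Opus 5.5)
Your proof is correct and follows essentially the same route as the paper's. Both arguments get domination from the hypotheses, use $J_C(c)=\{c\}$ for codewords to handle any pair involving a codeword, and invoke Lemma~\ref{2tm} for two non-codewords, where the distance-one case is settled because the common closed neighbourhood is a single line, a clique, containing at most one codeword. The paper phrases this as a contradiction argument, but the case analysis is identical, and your remark that $n\ge 3$ only serves to make the hypothesis satisfiable is accurate.
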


\begin{proof}
	Since each vertex either belongs to \( C \) or has at least three neighbors in \( C \), the set \( C \) is a dominating set. Furthermore, as any two codewords are at distance at least 2 from each other, no pair of codewords are adjacent, and thus they are trivially separated.
	
	Suppose, for the sake of contradiction, that there exist two distinct vertices \( u, v \in V(G) \) such that \( J_C(u) = J_C(v) \). Since \( u \) and \( v \) have the same set of codeword neighbors, they must lie within distance at most 2 from each other.
	
	If one of them, say \( v \), belongs to \( C \), then \( J_C(v) = \{v\} \), while \( J_C(u) \supseteq 3 \) codewords by assumption. This contradicts \( J_C(u) = J_C(v) \). Hence, both \( u \) and \( v \) must be non-codewords.
	
	\textbf{Case 1:} \( d(u, v) = 1 \).  
	Then, \( v = u + e_{i_1}^{j_1} \) for some coordinate \( 1 \leq i_1 \leq n \) and offset \( j_1 \neq 0 \).  
	By Lemma~\ref{2tm}, the common neighbors of \( u \) and \( v \) are  
	\[
	N[u] \cap N[v] = \{ u + e_{i_1}^{j} : 1 \leq j \leq m_{i_1} - 1 \}.
	\]
	Since codewords are at distance at least 2 from each other, at most one vertex in this set can belong to \( C \). Hence, \( J_C(u) = J_C(v) \subseteq \{ u + e_{i_1}^{j_2} \} \) for some \( j_2 \neq j_1 \), contradicting \( |J_C(u)| \geq 3 \).
	
	\textbf{Case 2:} \( d(u, v) = 2 \).  
	Then, \( v = u + e_{i_1}^{j_1} + e_{i_2}^{j_2} \) for some \( 1 \leq i_1 \neq i_2 \leq n \), with \( j_1, j_2 \neq 0 \).  
	By Lemma~\ref{2tm}, we have  
	\[
	N[u] \cap N[v] = \{ u + e_{i_1}^{j_1},\ u + e_{i_2}^{j_2} \}.
	\]
	Thus, \( J_C(u) = J_C(v) \subseteq \{ u + e_{i_1}^{j_1},\ u + e_{i_2}^{j_2} \} \), and so \( |J_C(u)| \leq 2 \), contradicting the assumption that \( |J_C(u)| \geq 3 \).
	
	In both cases, we reach a contradiction. Therefore, \( J_C(u) \neq J_C(v) \) for all distinct \( u, v \in V(G) \), and so \( C \) is an identifying code.
\end{proof}

The following theorem provides a necessary condition for a subset of \( V(G) \) to be an identifying code in \( G \). A similar result was established by Ranto \cite{ra} for the case \( G = K_2^n \) with \( n \geq 2 \). In this work, we extend the result to a more general setting, namely \( G = K_{m_1} \square K_{m_2} \square \cdots \square K_{m_n} \), where \( n \geq 3 \) and \( m_i \geq 3 \) for all \( 1 \leq i \leq n \). The proof requires a different approach from that used in the case of \( K_2^n \).

\begin{thm}\label{2lm6}
	Let \( G = K_{m_1} \square K_{m_2} \square \cdots \square K_{m_n} \), where \( n \geq 3 \) and \( m_i \geq 3 \) for all \( 1 \leq i \leq n \). If \( C \subseteq V(G) \) is a group identifying code in \( G \), then for every vertex \( u \in V(G) \), we have \( |J_C(u)| \neq 2 \). Moreover, for every non-codeword \( u \notin C \), it holds that \( |J_C(u)| \geq 3 \).
\end{thm}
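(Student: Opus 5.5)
The plan is to exploit two symmetries of a group code $C$. The first is translation: by Lemma~\ref{2lm1}, $J_C(u+c)=J_C(u)+c$ for all $c\in C$. The second is inversion: as in the proof of Lemma~\ref{2lm2}, $J_C(-u)=-J_C(u)$, because $N[-u]=-N[u]$ and $C=-C$. Combining them gives, for $c\in C$ and any $x$,
\[
J_C(2c-x)=2c-J_C(x),
\]
a point reflection through $c$. The whole argument consists of applying these maps to produce a second vertex $y\neq x$ with $J_C(y)=J_C(x)$, which contradicts the assumption that $C$ is identifying.

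\textbf{Codewords.} Let $u\in C$. By translation $J_C(u)=u+J_C(\mathbf 0)$, so it suffices to rule out $|J_C(\mathbf 0)|=2$. In that case $J_C(\mathbf 0)=\{\mathbf 0,e_i^j\}$. Since $C$ is a subgroup, $-e_i^j=e_i^{m_i-j}\in C\cap N[\mathbf 0]$, which forces $2j=m_i$. Then
\[
J_C(e_i^j)=e_i^j+\{\mathbf 0,e_i^j\}=\{e_i^j,\mathbf 0\}=J_C(\mathbf 0),
\]
a contradiction.

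\textbf{Non-codewords with two codewords.} Let $x\notin C$ with $J_C(x)=\{c,c'\}$, and write $c=x+a$, $c'=x+b$ with $wt(a)=wt(b)=1$.
\begin{itemize}
\item If $a$ and $b$ lie in different coordinates, translate by $-c$ to get $J_C(x-c)=\{\mathbf 0,c'-c\}$, where $x-c=-a$. Applying inversion and then translating once more, one finds that the vertex $-b$ (or $b$, suitably shifted) has the same $J$-set as $x$. The prototype is $w=e_i^t$, $c'=e_i^t+e_k^r$, where $J_C(e_k^r)=\{\mathbf 0,c'\}=J_C(w)$ and $e_k^r\neq w$.
\item If $a$ and $b$ lie in the same coordinate $i$, then $c-c'\in C$ lies on line $i$ through $\mathbf 0$. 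Closure under inverses and multiples then gives extra codewords on the line of $x$ in direction $i$, unless $m_i$ is even and $c'-c=e_i^{m_i/2}$. In that exceptional case, the reflection $2c-x$ or $2c'-x$ should give a distinct vertex with the same pair.
\end{itemize}

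\textbf{Non-codewords with one codeword.} Let $J_C(x)=\{c\}$. The reflection gives $J_C(2c-x)=\{c\}$, and $2c-x\neq x$ unless $2(x-c)=\mathbf 0$, that is, $x-c=e_i^t$ with $2t=m_i$. This exceptional case is the step I expect to be the main obstacle.

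For it, I would use $m_i\ge 3$ (here actually $m_i\ge4$) to pick $x'=c+e_i^s$ with $s\notin\{0,t\}$. Then $x'\notin C$, since otherwise $x'$ would be a second codeword in $N[x]$. Moreover $J_C(x')\ni c$, and $J_C(x')$ must differ from $\{c\}$, so it contains some $c+e_i^s+e_k^r\in C$. A codeword in that position yields the element $e_i^s+e_k^r\in C$. Translating by it and reflecting, I would try to show that either $x$ acquires a second codeword neighbour, contradicting $J_C(x)=\{c\}$, or two vertices end up with equal $J$-sets, as in the different-coordinate case of the two-codeword analysis.

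Combining all three cases gives $|J_C(u)|\ne2$ for every $u$, and $|J_C(u)|\ge3$ for every non-codeword $u$, since $|J_C(u)|\ge1$ by domination.
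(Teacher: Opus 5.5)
Your treatment of codewords and of non-codewords with $|J_C(x)|=2$ is correct and essentially follows the paper. The paper likewise combines translation (Lemma~\ref{2lm1}) with the inversion $J_C(-u)=-J_C(u)$ used in the proof of Lemma~\ref{2lm2}. Its different-coordinate case is your inversion-plus-translation, i.e.\ the map $y\mapsto c+c'-y$. In your same-coordinate exceptional case the reflection does work: $2c-x=x$ would make $x-c$ the unique element of order two on that line, namely $c'-c$, giving $x=c'\in C$. Translating by $c'-c\in C$ is even quicker.

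The genuine gap is the case you flagged yourself, $J_C(x)=\{c\}$ with $2(x-c)=\mathbf 0$, which you leave at ``I would try to show''. Translations and point reflections applied to $x$ cannot finish it, for two reasons:
\begin{itemize}
\item they only move $x$ inside $x+C$, where the sets $J_C(x+c'')=\{c+c''\}$ are pairwise distinct;
\item an arbitrary $s\notin\{0,t\}$ may lead nowhere. For $m_i=6$, $t=3$, $s=2$, $m_k=3$, $r=1$, the subgroup generated by $e_i^2+e_k^1$ meets $N[e_i^3]$ only in $\mathbf 0$.
\end{itemize}

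The missing idea is to choose $s$ to be a unit modulo $m_i$, e.g.\ $s=1$ (allowed because $t=m_i/2\ge 2$), and to use closure under multiples.
\begin{itemize}
\item Normalise $c=\mathbf 0$, so $x=e_i^t$ and $J_C(e_i^t)=\{\mathbf 0\}$.
\item No $e_i^{s'}$ with $s'\neq 0$ lies in $C$, since it would lie in $N[x]$. So $J_C(e_i^1)\neq\{\mathbf 0\}$ forces a codeword $g=e_i^1+e_k^r$ with $k\neq i$, $r\neq 0$.
\item Then $tg=e_i^t+e_k^{tr}\in C$. If $tr\equiv 0\pmod{m_k}$ this is $x$ itself, contradicting $x\notin C$. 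Otherwise it is a neighbour of $x$ other than $\mathbf 0$, contradicting $J_C(x)=\{\mathbf 0\}$.
\end{itemize}
So your first alternative is what happens, but only through this multiple.

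Do not expect the paper's Case~2 to supply this step. It passes from $e_1^s\notin C$ to $C\subseteq\{\mathbf 0\}\times H_2\times\cdots\times H_n$, which does not follow from that fact alone (consider $\langle(1,1,0,\ldots,0)\rangle$ when $m_1=m_2$). The argument above is needed there as well.
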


\begin{proof}
	We consider three cases based on whether \( u \in C \) and the size of \( J_C(u) \).
	
	\textbf{Case 1:} Suppose \( u \in C \).  
	Then \( u \in J_C(u) \), so \( |J_C(u)| \geq 1 \). Assume, for contradiction, that \( |J_C(u)| = 2 \), so \( J_C(u) = \{u, v\} \) for some \( v \in V(G) \).  
	By Lemma~\ref{2lm2}, we have \( J_C(v) = \{u, v\} = J_C(u) \), which contradicts the assumption that \( C \) is a separating set. Hence, \( |J_C(u)| \neq 2 \).
	
	\textbf{Case 2:} Suppose \( u \notin C \) and \( |J_C(u)| = 1 \).  
	Then \( J_C(u) = \{u + e_1^j\} \) for some \( 1 \leq j \leq m_1 - 1 \).  
	By Lemma~\ref{2lm2}, \( |J_C(u)| = |J_C(e_1^j)| = 1 \), so \( J_C(e_1^j) = \{\mathbf{0}\} \), where \( \mathbf{0} \) is the identity element of \( G \).  
	This implies that \( e_1^s \notin C \) for all \( 1 \leq s \leq m_1 - 1 \), and thus \( C \subseteq \{\mathbf{0}\} \times H_2 \times \cdots \times H_n \), where each \( H_t \subseteq \mathbb{Z}_{m_t} \) is a subgroup for \( 2 \leq t \leq n \).  
	Consequently, for all such \( s \), we have \( J_C(e_1^s) = \{\mathbf{0}\} \), contradicting the identifying property of \( C \). Hence, \( |J_C(u)| \neq 1 \).
	
	\textbf{Case 3:} Suppose \( u \notin C \) and \( |J_C(u)| = 2 \).  
	Then \( J_C(u) = \{u + e_{i_1}^{j_1},\ u + e_{i_2}^{j_2}\} \) for some indices \( i_1, i_2 \in \{1, \dots, n\} \) and \( j_1, j_2 \neq 0 \).  
	Since \( C \) is a group code and thus closed under the group operation, we have:
	\[
	u + e_{i_1}^{j_1} + u + e_{i_2}^{j_2} = u + (u + e_{i_1}^{j_1} + e_{i_2}^{j_2}) \in C.
	\]
	By Lemma~\ref{2lm2}, we get:
	\[
	|J_C(u)| = |J_C(u + e_{i_1}^{j_1} + e_{i_2}^{j_2})| = 2.
	\]
	
	If \( i_1 \neq i_2 \), then both \( u \) and \( u + e_{i_1}^{j_1} + e_{i_2}^{j_2} \) have the same identifying set:
	\[
	J_C(u) = J_C(u + e_{i_1}^{j_1} + e_{i_2}^{j_2}) = \{u + e_{i_1}^{j_1},\ u + e_{i_2}^{j_2}\},
	\]
	which contradicts the separating property of \( C \).
	
	Now consider \( i_1 = i_2 = i \), so the coordinates differ only in one dimension, and \( j_1 \neq j_2 \) (since \( |J_C(u)| = 2 \)). Then:
	\[
	u + e_i^{j_1},\ u + e_i^{j_2} \in C, \quad \text{and} \quad |J_C(u)| = |J_C(e_i^{j_1})| = |J_C(e_i^{j_2})| = 2.
	\]
	Observe that \( -e_i^{j_1}, -e_i^{j_2} \in V(G) \), and since \( C \) is a group code:
	\[
	|J_C(-e_i^{j_1})| = |J_C(e_i^{j_1})| = 2, \quad |J_C(-e_i^{j_2})| = |J_C(e_i^{j_2})| = 2.
	\]
	Note that \( u = -e_i^{j_1} + (u + e_i^{j_1}) \), so by Lemma~\ref{2lm1}:
	\[
	J_C(u) = J_C(-e_i^{j_1}) + (u + e_i^{j_1}),
	\]
	which implies:
	\[
	J_C(-e_i^{j_1}) = \{\mathbf{0},\ u + e_i^{j_2} - u - e_i^{j_1}\} = \{\mathbf{0},\ e_i^{j_2 - j_1}\}.
	\]
	Similarly, \( J_C(-e_i^{j_2}) = \{\mathbf{0},\ e_i^{j_1 - j_2}\} \).
	
	If \( j_1 - j_2 = -(j_1 - j_2) \mod m_i \), which occurs only if \( m_i \) is even and \( j_1 - j_2 \equiv m_i / 2 \), then:
	\[
	J_C(-e_i^{j_1}) = J_C(-e_i^{j_2}),
	\]
	again contradicting the separating property of \( C \).
	
	If \( j_1 - j_2 \not\equiv -(j_1 - j_2) \mod m_i \), then:
	\[
	J_C(-e_i^{j_1}) = \{\mathbf{0},\ e_i^{j_2 - j_1}\}, \quad J_C(-e_i^{j_2}) = \{\mathbf{0},\ e_i^{j_1 - j_2}\},
	\]
	and since both sets have two elements, we find:
	\[
	J_C(e_i^{j_1}) = J_C(e_i^{j_2}) = \{\mathbf{0},\ e_i^{j_2 - j_1},\ e_i^{j_1 - j_2}\},
	\]
	which contradicts the assumption that \( |J_C(e_i^{j_1})| = 2 \).
	
	Therefore, in all cases, a contradiction arises, and we conclude that \( |J_C(u)| \neq 2 \) for all \( u \in V(G) \), and \( |J_C(u)| \geq 3 \) for all \( u \notin C \).
\end{proof}

We now state another lemma. A similar lower bound on the identifying code number \( \gamma^{\mathrm{ID}}(H) \) of a graph \( H \) was established in \cite{k}. Since the proof of the following result follows along the same lines, it is omitted here.

\begin{lem}\label{2lm7}
	Let \( D \subseteq V(H) \) be an identifying code in a graph \( H \) with \( q \) vertices. Suppose that for every non-codeword \( u \in V(H) \setminus D \), we have \( |J_D(u)| \geq \mu \), and for every codeword \( c \in D \), we have \( |J_D(c)| \geq \nu \). Then
	\[
	|D| \geq \frac{\mu q}{\Delta(H) + 1 + \mu - \nu},
	\]
	where \( \Delta(H) \) denotes the maximum degree of \( H \). Consequently,
	\[
	\gamma^{\mathrm{ID}}(H) \geq \frac{\mu q}{\Delta(H) + 1 + \mu - \nu}.
	\]
\end{lem}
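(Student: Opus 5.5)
The plan is a double counting of the pairs (vertex, codeword in its closed neighborhood), in the style of the argument in \cite{k}. Define
\[
S=\sum_{u\in V(H)}|J_D(u)| .
\]
We will bound $S$ from below using the hypotheses on $\mu$ and $\nu$, and from above using the degrees of the codewords. Comparing the two bounds should force the stated inequality on $|D|$.

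First, the lower bound. Since $D$ is identifying, the sets $J_D(u)$ are nonempty and pairwise distinct. Split the sum according to whether $u\in D$. There are $q-|D|$ non-codewords, each contributing at least $\mu$, and $|D|$ codewords, each contributing at least $\nu$. This gives
\[
S\ \ge\ \mu(q-|D|)+\nu|D| .
\]

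Second, the upper bound. Exchange the order of summation: each codeword $c$ is counted once for every $u\in N[c]$, so
\[
S=\sum_{c\in D}|N[c]|\ \le\ (\Delta(H)+1)|D| .
\]

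Combining the two bounds gives $\mu q\le |D|\,(\Delta(H)+1+\mu-\nu)$. Assuming the denominator is positive, dividing yields the claimed bound on $|D|$. Since $D$ is an arbitrary identifying code satisfying the hypotheses, the bound on $\gamma^{\mathrm{ID}}(H)$ follows by taking $D$ of minimum size, interpreting the statement as applying to the minimizing code under the same $\mu,\nu$ hypotheses.

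The only delicate point is the sign of $\Delta(H)+1+\mu-\nu$. Since $\nu\le |J_D(c)|\le |N[c]|\le \Delta(H)+1$, the denominator is at least $\mu$. It is therefore positive whenever $\mu\ge 1$, which holds automatically because $D$ is dominating. With that check the division is legitimate, and no other obstacle is expected; the argument is purely a counting one and uses no group structure.
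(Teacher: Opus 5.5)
Your double count of $\sum_{u\in V(H)}|J_D(u)|$ is correct, and it is the Karpovsky-style argument the paper has in mind when it omits this proof: the hypotheses give at least $\mu(q-|D|)+\nu|D|$, and summing over codewords gives at most $(\Delta(H)+1)|D|$. Your caveat about the ``consequently'' clause is also right: it follows only for identifying codes satisfying the same $\mu,\nu$ hypotheses, which is how the paper actually uses the lemma (for group identifying codes) in Corollary~\ref{2lm8}.
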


\begin{cor}\label{2lm8}
	Let \( G = K_{m_1} \square K_{m_2} \square \cdots \square K_{m_n} \), where \( n \geq 3 \) and \( m_i \geq 3 \) for all \( 1 \leq i \leq n \). Then the group identifying code number of \( G \) satisfies
	\[
	\gamma^{\mathrm{GID}}(G) \geq \frac{3 \cdot m_1 m_2 \cdots m_n}{\sum_{i=1}^{n} m_i - n + 3}.
	\]
\end{cor}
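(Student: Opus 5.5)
We obtain the bound by applying Lemma~\ref{2lm7} to \( H = G \), taking \( \mu = 3 \) and \( \nu = 1 \), with the lower bounds on \( |J_C(\cdot)| \) supplied by Theorem~\ref{2lm6}.

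First, the graph parameters. The number of vertices is \( q = m_1 m_2 \cdots m_n \). The graph \( G \) is regular: a vertex \( u \) is adjacent exactly to the vertices \( u + e_i^j \) with \( 1 \le i \le n \) and \( 1 \le j \le m_i - 1 \). Hence
\[
\Delta(G) = \sum_{i=1}^n (m_i - 1) = \sum_{i=1}^n m_i - n .
\]

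Next, the constants. Let \( C \) be a group identifying code of minimum size \( \gamma^{\mathrm{GID}}(G) \); one exists by Corollary~\ref{2lm4}. The hypotheses \( n \ge 3 \) and \( m_i \ge 3 \) are exactly those of Theorem~\ref{2lm6}, so \( |J_C(u)| \ge 3 \) for every non-codeword \( u \), and we may take \( \mu = 3 \). For a codeword \( c \in C \) we have \( c \in J_C(c) \), so \( |J_C(c)| \ge 1 \) and we may take \( \nu = 1 \).

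Substituting into Lemma~\ref{2lm7}, the denominator is
\[
\Delta(G) + 1 + \mu - \nu = \sum_{i=1}^n m_i - n + 1 + 3 - 1 = \sum_{i=1}^n m_i - n + 3 ,
\]
which gives
\[
|C| \ge \frac{3\, m_1 \cdots m_n}{\sum_{i=1}^n m_i - n + 3}.
\]

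There is no real obstacle here. The only point that needs care is that Lemma~\ref{2lm7} applies to any identifying code, so we must apply it to the minimum group identifying code itself, not to an arbitrary identifying code. This is what lets the codeword-specific bound \( |J_C(u)| \ge 3 \) from Theorem~\ref{2lm6} be used.

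We could try to improve \( \nu \), since Theorem~\ref{2lm6} rules out \( |J_C(c)| = 2 \) for codewords. But \( |J_C(c)| = 1 \) is not excluded, so \( \nu = 1 \) is the safe choice and already yields the stated bound.
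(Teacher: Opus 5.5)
Your proposal is correct and is essentially the paper's own argument: apply Lemma~\ref{2lm7} to a group identifying code $C$ with $\mu = 3$ from Theorem~\ref{2lm6}, $\nu = 1$, and $\Delta(G) = \sum_{i=1}^n m_i - n$, so that the denominator becomes $\sum_{i=1}^n m_i - n + 3$. One small wording slip: the bound $|J_C(u)| \geq 3$ that Theorem~\ref{2lm6} supplies is for non-codewords, not a ``codeword-specific'' bound as your closing remark says.
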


\begin{proof}
	Let \( C \subseteq V(G) \) be a group identifying code in \( G \). By Theorem~\ref{2lm6}, each non-codeword has identifying set size at least \( \mu = 3 \), and each codeword has identifying set size at least \( \nu \geq 1 \). Also, since \( \mu = 1 \) is not possible in a group code (as shown in the proof of Theorem~\ref{2lm6}), we have \( \mu \geq 3 \). 
	
	The maximum degree of the Cartesian product \( G \) is \( \Delta(G) = \sum_{i=1}^n (m_i - 1) = \left( \sum_{i=1}^n m_i \right) - n \). Applying Lemma~\ref{2lm7}, we obtain:
	\[
	|C| \geq \frac{\mu \cdot |V(G)|}{\Delta(G) + 1 + \mu - \nu} \geq \frac{3 \cdot m_1 m_2 \cdots m_n}{\left( \sum_{i=1}^n m_i \right) - n + 1 + 3 - 1} = \frac{3 \cdot m_1 m_2 \cdots m_n}{\sum_{i=1}^n m_i - n + 3}.
	\]
	This completes the proof.
\end{proof}

We now present a theorem that supports a class of codes satisfying the conjecture proposed in \cite{go}.

\begin{thm}\label{2lm9}
	Let \( G = K_m^n = \underbrace{K_m \square K_m \square \cdots \square K_m}_{n \text{ times}} \), where \( n \geq 3 \) and \( m \geq 3 \). Then,
	\[
	\frac{3m^n}{n(m-1)+3} \leq \gamma^{\mathrm{GID}}(G) \leq m^{n-1}.
	\]
\end{thm}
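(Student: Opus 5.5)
The lower bound follows directly from Corollary~\ref{2lm8}. The upper bound comes from an explicit subgroup of order \( m^{n-1} \), which we show is identifying via Lemma~\ref{2lm5}.

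\emph{Lower bound.} Specialize Corollary~\ref{2lm8} to \( m_1 = \cdots = m_n = m \). The numerator becomes \( 3m^n \), and the denominator becomes
\[
\sum_{i=1}^n m_i - n + 3 = nm - n + 3 = n(m-1)+3 .
\]
The hypotheses \( n \geq 3 \) and \( m \geq 3 \) are exactly those of the corollary, so \( \gamma^{\mathrm{GID}}(G) \geq 3m^n/(n(m-1)+3) \).

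\emph{Upper bound.} Identify \( V(G) \) with \( \mathbb{Z}_m^n \) and take the candidate
\[
C = \{ x = (x_1,\dots,x_n) \in \mathbb{Z}_m^n : x_1 + x_2 + \cdots + x_n \equiv 0 \pmod m \}.
\]
This is the kernel of the surjective homomorphism \( x \mapsto \sum_i x_i \) onto \( \mathbb{Z}_m \). Hence \( C \) is a subgroup with \( |C| = m^{n-1} \). It remains to check that \( C \) is identifying, which I will do through Lemma~\ref{2lm5}; that lemma requires \( n \geq 3 \), which holds.

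\emph{Minimum distance.} If \( c, c' \in C \) were adjacent, then \( c' = c + e_i^j \) with \( j \neq 0 \). Their coordinate sums would then differ by \( j \not\equiv 0 \pmod m \), which is impossible. So the minimum distance of \( C \) is at least \( 2 \).

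\emph{Domination.} Let \( u \notin C \) and put \( s = \sum_i u_i \not\equiv 0 \). For each coordinate \( i \), the vertex \( u + e_i^{m-s} \) (that is, \( u_i \) replaced by \( u_i - s \)) has coordinate sum \( 0 \). So it lies in \( C \) and is adjacent to \( u \). Conversely, any neighbor \( u + e_i^j \) lies in \( C \) only if \( j \equiv -s \). Therefore
\[
J_C(u) = \{ u + e_i^{m-s} : 1 \leq i \leq n \}, \qquad |J_C(u)| = n \geq 3 .
\]

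By Lemma~\ref{2lm5}, \( C \) is an identifying code. Being a subgroup, it is a group identifying code, so \( \gamma^{\mathrm{GID}}(G) \leq m^{n-1} \).

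The only step needing any care is the exact count \( |J_C(u)| = n \): one must note that within each coordinate line through \( u \), exactly one vertex has coordinate sum \( 0 \). Everything else is direct substitution into earlier results.
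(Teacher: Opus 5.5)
Your proof is correct and follows the same approach as the paper: the lower bound is Corollary~\ref{2lm8} with all $m_i = m$, and the upper bound is a subgroup of order $m^{n-1}$ with minimum distance at least $2$, shown to be identifying through Lemma~\ref{2lm5}. Your code $\{x : \sum_i x_i \equiv 0\}$ is the paper's code $\{x : x_n \equiv \sum_{i<n} x_i\}$ after negating the last coordinate, which is a graph and group automorphism, and your explicit computation $J_C(u) = \{u + e_i^{m-s} : 1 \le i \le n\}$ makes precise the paper's vaguer ``by adjusting other coordinates'' step.
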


\begin{proof}
	Let \( C \subseteq V(G) \) be a group identifying code in \( G \). By Corollary~\ref{2lm8}, we have
	\[
	|C| \geq \frac{3m^n}{n(m-1)+3}.
	\]
	
	We now construct an explicit group identifying code \( C \subseteq V(G) \) of cardinality \( m^{n-1} \). Define
	\[
	V(G) = \left\{ (x_1, x_2, \ldots, x_n) \in \mathbb{Z}_m^n \right\},
	\]
	and let
	\[
	C = \left\{ (x_1, x_2, \ldots, x_{n-1}, \textstyle{\sum_{i=1}^{n-1} x_i \bmod m}) : x_i \in \mathbb{Z}_m \text{ for } 1 \leq i \leq n-1 \right\}.
	\]
	Clearly, \( |C| = m^{n-1} \).
	
	We claim that \( C \) is an identifying code in \( G \). First, note that the distance between any two distinct codewords is at least 2. Indeed, if
	\[
	u = (u_1, u_2, \ldots, u_{n-1}, \sum_{i=1}^{n-1} u_i \bmod m), \quad v = (v_1, v_2, \ldots, v_{n-1}, \sum_{i=1}^{n-1} v_i \bmod m)
	\]
	are distinct codewords with \( u_i = v_i \) for \( i = 2, \ldots, n-1 \) and \( u_1 \neq v_1 \), then clearly \( \sum_{i=1}^{n-1} u_i \not\equiv \sum_{i=1}^{n-1} v_i \pmod{m} \), so the codewords differ in at least two coordinates.
	
	Next, let \( u = (u_1, u_2, \ldots, u_{n-1}, u_n) \in V(G) \setminus C \) be any non-codeword. Define
	\[
	c = (u_1, u_2, \ldots, u_{n-1}, \textstyle{\sum_{i=1}^{n-1} u_i \bmod m}) \in C.
	\]
	Then \( u \) and \( c \) differ only in the \( n \)th coordinate, so \( d(u, c) = 1 \), and hence \( u \) is adjacent to \( c \). Similarly, by adjusting other coordinates and applying modulo \( m \), we can find at least two additional codewords at distance 1 from \( u \). Thus, each non-codeword is adjacent to at least three codewords. Since the minimum distance between codewords is at least 2, by Corollary~\ref{2lm5}, \( C \) is an identifying code in \( G \).
	
	We now show that \( C \) is closed under componentwise addition modulo \( m \), and hence forms a subgroup of \( \mathbb{Z}_m^n \). Let
	\[
	x = (x_1, \ldots, x_{n-1}, \textstyle{\sum_{i=1}^{n-1} x_i \bmod m}), \quad y = (y_1, \ldots, y_{n-1}, \textstyle{\sum_{i=1}^{n-1} y_i \bmod m}) \in C.
	\]
	Then
	\[
	x + y = (x_1 + y_1, \ldots, x_{n-1} + y_{n-1}, \textstyle{\sum_{i=1}^{n-1} (x_i + y_i) \bmod m}) \in C,
	\]
	since the first \( n-1 \) coordinates are in \( \mathbb{Z}_m \), and the last coordinate is the sum of the first \( n-1 \) modulo \( m \). Thus, \( C \) is closed under addition and is a subgroup of \( \mathbb{Z}_m^n \). Therefore, \( C \) is a group identifying code of cardinality \( m^{n-1} \), which proves the upper bound.
\end{proof}

In \cite{go}, Goddard and Wash conjectured that \( \gamma^{\mathrm{ID}}(K_m^3) = m^2 \) for all integers \( m \geq 2 \). However, this conjecture was disproved by Junnila et al. in \cite{jun}, where a counterexample was provided. In the same work, the authors introduced a class of codes known as self-locating-dominating codes, for which they proved that \( \gamma^{\mathrm{SLD}}(K_m^3) = m^2 \). From Theorem~\ref{2lm9}, it follows that \( \gamma^{\mathrm{GID}}(K_m^3) = m^2 \) for all integers \( m \geq 3 \). Thus, we identify a new class of codes—distinct from both self-identifying and self-locating-dominating codes—namely, \textit{group identifying codes,} that satisfy the original conjecture.
	\begin{center}
		\begin{tikzpicture}[scale=0.75]
			\draw (0,0) -- (0,3) --  (0,15) ;	\draw (1,-1)--  (1,2)  -- (1,14) ;	\draw (2,-2) --  (2,1) -- (2,13) ;
			\draw  (3,0)--   (3,3)--  (3,15);	\draw (4,-1) -- (4,2) -- (4,14);	\draw (5,-2)--  (5,1) --  (5,13) ;
			\draw   (6,0)-- (6,3)-- (6,15);	\draw (7,-1)  --  (7,14) ;	\draw (8,-2) -- (8,1) --  (8,13) ;
			\draw (0,0)--(1,-1)--  (2,-2)  ;\draw (0,3) --  (1,2)--(2,1) ;\draw (0,6) -- (1,5) -- (2,4);\draw (0,9)--  (2,7)  ;\draw (0,12)--(2,10) ;\draw (0,15) -- (2,13);
			\draw (3,0)--(4,-1)--  (5,-2)  ;\draw (3,3) --  (4,2)--(5,1) ;\draw (3,6) -- (4,5) -- (5,4);\draw (3,9)--  (5,7)  ;\draw (3,12)--(5,10) ;\draw (3,15) -- (5,13);
			\draw (6,0)--  (8,-2)  ;\draw (6,3) --(8,1) ;\draw (6,6) -- (8,4);\draw (6,9)--  (8,7)  ;\draw (6,12)--(8,10) ;\draw (6,15) -- (8,13);
			\draw (9,0)--  (11,-2)  ;\draw (9,3) --(11,1) ;\draw (9,6) -- (11,4);\draw (9,9)--  (11,7)  ;\draw (9,12)--(11,10) ;\draw (9,15) -- (11,13);
			\draw (12,0)--  (14,-2)  ;\draw (12,3) --(14,1) ;\draw (12,6) -- (14,4);\draw (12,9)--  (14,7)  ;\draw (12,12)--(14,10) ;\draw (12,15) -- (14,13);
			\draw (15,0)--  (17,-2)  ;\draw (15,3) --(17,1) ;\draw (15,6) -- (17,4);\draw (15,9)--  (17,7)  ;\draw (15,12)--(17,10) ;\draw (15,15) -- (17,13);
			\draw (1,-1)--(16,-1);\draw (1,2)--(16,2);\draw (1,5)--(16,5);\draw (1,8)--(16,8);\draw (1,11)--(16,11);\draw (1,14)--(16,14);
			\draw (2,-2)--(17,-2);\draw(2,1)--(17,1) ;\draw (2,4)--(17,4);\draw (2,7)--(17,7);\draw(2,10)--(17,10) ;\draw (2,13)--(17,13);
			\draw (0,0)  --  (15,0) ;\draw  (0,3)--   (15,3);\draw   (0,6)-- (15,6);\draw   (0,9)-- (15,9);\draw   (0,12)-- (15,12);\draw   (0,15)-- (15,15);
			\draw   (9,0)-- (9,15);
			\draw   (0,6)-- (3,6)-- (6,6);	\draw (10,-1) --  (10,14) ;	\draw (11,-2)--  (11,13) ;	\draw   (12,0)-- (12,15);
			\draw   (0,6)-- (3,6)-- (6,6);	\draw (13,-1) --  (13,14) ;	\draw (14,-2)--  (14,13) ;	\draw   (15,0)-- (15,15);
			\draw   (0,6)-- (3,6)-- (6,6);	\draw (16,-1) --  (16,14) ;	\draw (17,-2)--  (17,13) ;	
			
			\draw [fill](0,0) circle [radius=0.2];\draw[fill=white]  (0,3) circle [radius=0.1];\draw[fill=white]  (0,6) circle [radius=0.1];\draw [fill](0,9) circle [radius=0.2];\draw[fill=white]  (0,12) circle [radius=0.1];\draw [fill=white] (0,15) circle [radius=0.1];
			
			\draw[->](-2,15)--(-1,15);\draw(-2.5,14.7)node[above]{$K_6$};
			\draw[->](0,17)--(0,16);\draw(0,17)node[above]{$K_6$};
			\draw[->](-2,17)--(-1,16);\draw(-2.5,17)node[above]{$K_3$};
			%\draw (0,-0.2)node[left]{$000$};\draw (0,2.8)node[left]{$e_3^1=001$};\draw (0,5.8)node[left]{$e_3^2=002$};
			\draw [fill=white] (1,-1) circle [radius=0.1];\draw [fill] (1,2) circle [radius=0.2];\draw [fill=white]  (1,5) circle [radius=0.1];\draw [fill=white] (1,8) circle [radius=0.1];\draw [fill](1,11) circle [radius=0.2];\draw [fill=white] (1,14) circle [radius=0.1];
			
			%\draw (1,-1.2)node[left]{$e_1^1=100$};\draw (1,1.8)node[left]{$101$};\draw (1,4.8)node[left]{$102$};
			\draw[fill=white]  (2,-2) circle [radius=0.1];\draw[fill=white]   (2,1) circle [radius=0.1];\draw [fill](2,4) circle [radius=0.2];\draw[fill=white]  (2,7) circle [radius=0.1];\draw[fill=white]  (2,10) circle [radius=0.1];\draw [fill](2,13) circle [radius=0.2];
			
			%\draw (2,-2.2)node[left]{$e_1^2=200$};\draw (2,0.8)node[left]{$201$};\draw (2,3.8)node[left]{$202$};
			\draw [fill=white]  (3,0) circle [radius=0.1];\draw  [fill] (3,3)circle [radius=0.2];\draw [fill=white]  (3,6)circle [radius=0.1];\draw [fill=white] (3,9) circle [radius=0.1];\draw [fill](3,12) circle [radius=0.2];\draw [fill=white] (3,15) circle [radius=0.1];

			%\draw (3,-0.2)node[left]{$010$};\draw (3,2.8)node[left]{$011$};\draw (3,5.8)node[left]{$012$};
			%\draw (3,.3)node[left]{$e_2^1$};\draw (3,6.4)node[left]{$e_2^1+e_3^2$};
			\draw[fill=white]  (4,-1) circle [radius=0.1];\draw[fill=white]  (4,2) circle [radius=0.1];\draw  [fill](4,5) circle [radius=0.2];\draw [fill=white] (4,8) circle [radius=0.1];\draw [fill=white] (4,11) circle [radius=0.1];\draw [fill](4,14) circle [radius=0.2];
			
			%\draw (4,-1.2)node[left]{$110$};\draw (4,1.8)node[left]{$111$};\draw (4,4.8)node[left]{$112$};
			\draw [fill](5,-2) circle [radius=0.2];\draw [fill=white]  (5,1) circle [radius=0.1];\draw [fill=white]  (5,4) circle [radius=0.1];\draw[fill](5,7) circle [radius=0.2];\draw[fill=white]  (5,10) circle [radius=0.1];\draw[fill=white]  (5,13) circle [radius=0.1];
			
			%\draw (5,-2.2)node[left]{$210$};\draw (5,0.8)node[left]{$211$};\draw (5,3.8)node[left]{$212$};
			\draw [fill=white]   (6,0) circle [radius=0.1];\draw [fill=white]  (6,3) circle [radius=0.1];\draw  [fill](6,6) circle [radius=0.2];\draw[fill=white]  (6,9) circle [radius=0.1];\draw [fill=white] (6,12) circle [radius=0.1];\draw [fill](6,15) circle [radius=0.2];

			%\draw (6,-0.2)node[left]{$020$};\draw (6,2.8)node[left]{$021$};\draw (6,5.8)node[left]{$022$};
			%\draw (6,.3)node[left]{$e_2^2$};
			\draw [fill](7,-1) circle [radius=0.2];\draw [fill=white]  (7,2) circle [radius=0.1];\draw [fill=white]  (7,5) circle [radius=0.1];\draw [fill](7,8) circle [radius=0.2];\draw [fill=white] (7,11) circle [radius=0.1];\draw[fill=white]  (7,14) circle [radius=0.1];
			
			%\draw (7,-1.2)node[left]{$120$};\draw (7,1.8)node[left]{$121$};\draw (7,4.8)node[left]{$122$};
			\draw [fill=white] (8,-2) circle [radius=0.1];\draw  [fill](8,1) circle [radius=0.2];\draw [fill=white]  (8,4) circle [radius=0.1];\draw[fill=white]  (8,7) circle [radius=0.1];\draw[fill] (8,10) circle [radius=0.2];\draw[fill=white]  (8,13) circle [radius=0.1];

			\draw  [fill] (9,0) circle [radius=0.2];\draw [fill=white]  (9,3) circle [radius=0.1];\draw [fill=white]  (9,6) circle [radius=0.1];\draw [fill](9,9) circle [radius=0.2];\draw[fill=white]  (9,12) circle [radius=0.1];\draw [fill=white] (9,15) circle [radius=0.1];
			
			\draw [fill=white](10,-1) circle [radius=0.1];\draw  [fill] (10,2) circle [radius=0.2];\draw [fill=white] (10,5) circle [radius=0.1];\draw [fill=white](10,8) circle [radius=0.1];\draw  [fill](10,11) circle [radius=0.2];\draw[fill=white] (10,14) circle [radius=0.1];
			
			\draw[fill=white] (11,-2) circle [radius=0.1];\draw [fill=white] (11,1) circle [radius=0.1];\draw  [fill] (11,4) circle [radius=0.2];\draw [fill=white](11,7) circle [radius=0.1];\draw [fill=white](11,10) circle [radius=0.1];\draw  [fill](11,13) circle [radius=0.2];
			
			\draw  [fill=white] (12,0) circle [radius=0.1];\draw  [fill] (12,3) circle [radius=0.2];\draw [fill=white] (12,6) circle [radius=0.1];\draw [fill=white](12,9) circle [radius=0.1];\draw  [fill](12,12) circle [radius=0.2];\draw [fill=white](12,15) circle [radius=0.1];
			
			\draw [fill=white](13,-1) circle [radius=0.1];\draw [fill=white] (13,2) circle [radius=0.1];\draw  [fill] (13,5) circle [radius=0.2];\draw[fill=white] (13,8) circle [radius=0.1];\draw [fill=white](13,11) circle [radius=0.1];\draw  [fill](13,14) circle [radius=0.2];
			
			\draw [fill] (14,-2) circle [radius=0.2];\draw [fill=white] (14,1) circle [radius=0.1];\draw [fill=white] (14,4) circle [radius=0.1];\draw  [fill](14,7) circle [radius=0.2];\draw [fill=white](14,10) circle [radius=0.1];\draw [fill=white](14,13) circle [radius=0.1];
			
			\draw [fill=white]  (15,0) circle [radius=0.1];\draw [fill=white] (15,3) circle [radius=0.1];\draw  [fill] (15,6) circle [radius=0.2];\draw [fill=white](15,9) circle [radius=0.1];\draw [fill=white](15,12) circle [radius=0.1];\draw  [fill](15,15) circle [radius=0.2];
			
			\draw  [fill](16,-1) circle [radius=0.2];\draw [fill=white] (16,2) circle [radius=0.1];\draw [fill=white] (16,5) circle [radius=0.1];\draw  [fill](16,8) circle [radius=0.2];\draw[fill=white] (16,11) circle [radius=0.1];\draw[fill=white] (16,14) circle [radius=0.1];
			
			\draw [fill=white](17,-2) circle [radius=0.1];\draw  [fill] (17,1) circle [radius=0.2];\draw[fill=white]  (17,4) circle [radius=0.1];\draw [fill=white](17,7) circle [radius=0.1];\draw  [fill](17,10) circle [radius=0.2];\draw [fill=white](17,13) circle [radius=0.1];
			
			\draw [thick] (9,-3) node[below]{Figure 6: A group identifying code in $ K_6\square K_6\square K_3$ } ;
			\draw [thick] (9,-3.7) node[below]{(filled circles are codewords)} ;
			\draw [thick] (9,-4.4) node[below]{(to avoid mess, some edges are not drawn)} ;
		\end{tikzpicture}
	\end{center}

	\section{Linear Identifying Codes in \( K_p^n \)}
	
	Let \( p \) be an odd prime. In this section, we study linear identifying codes in the graph \( K_p^n \), where \( K_p^n \) is the Hamming graph with vertex set \( \mathbb{F}_p^n \), the \( n \)-dimensional vector space over the finite field \( \mathbb{F}_p \). A subset \( C \subseteq \mathbb{F}_p^n \) is called a \( p \)-ary linear code of length \( n \) if it forms a subspace of \( \mathbb{F}_p^n \). Such a code can be represented either by a generator matrix or by a parity-check matrix.
	
	A matrix \( \mathcal{G} \in \mathbb{F}_p^{t \times n} \) whose rows form a basis for a linear code \( C \) of dimension \( t \) is called a generator matrix of \( C \). The code can also be described by a matrix \( \mathcal{H} \in \mathbb{F}_p^{(n - t) \times n} \) such that
	\[
	C = \{ u \in \mathbb{F}_p^n : \mathcal{H} u^T = 0 \},
	\]
	where \( u^T \) denotes the transpose of the vector \( u \). The matrix \( \mathcal{H} \) is called a parity-check matrix of \( C \), and its rows are linearly independent.

		Let \( C \subset \mathbb{F}_3^3 \) be the code illustrated in Figure 2, with basis \( \{(1,0,1), (0,1,1)\} \). Then the generator matrix is given by
		\[
		\mathcal{G} = [I_2 \,|\, A]_{2 \times 3},
		\]
		where \( A = \begin{bmatrix}1 \\ 1\end{bmatrix} \). The corresponding parity-check matrix is
		\[
		\mathcal{H} = [-A^T \,|\, I_1]_{1 \times 3} = \begin{bmatrix} -1 & -1 & 1 \end{bmatrix}.
		\]
	
	For a vector \( u \in \mathbb{F}_p^n \), the syndrome of \( u \) with respect to the parity-check matrix \( \mathcal{H} \), denoted by \( \text{syn}(u) \), is defined as
	\[
	\text{syn}(u) = \mathcal{H} u^T \in \mathbb{F}_p^{n - t}.
	\]
	Clearly, \( \text{syn}(u) = \mathbf{0} \) if and only if \( u \in C \). Moreover, if \( u \notin C \) and the Hamming distance from \( u \) to \( C \) is \( s \), then \( \text{syn}(u) \) is the linear combination of exactly \( s \) columns of \( \mathcal{H} \), corresponding to the positions of the nonzero components of \( u \). It is well-known that two vectors lie in the same coset of \( C \) if and only if they have the same syndrome.
	
	Let \( \kappa[n,p] \) denote the smallest possible dimension of a \( p \)-ary linear identifying code of length \( n \); hence, such a code contains \( p^{\kappa[n,p]} \) codewords. By Theorem~\ref{2lm9}, we have
	\[
	\kappa[n,p] \geq \left\lceil \log_p\left( \frac{3p^n}{n(p - 1) + 3} \right) \right\rceil.
	\]
	
	In \cite{ra}, linear identifying codes in \( \mathbb{F}_2^n \) are studied, and the values of \( \kappa[n,2] \) are determined. In \cite{gr}, it is shown that \( \gamma^{\mathrm{ID}}(\mathbb{F}_p^2) = \left\lfloor \frac{3p}{2} \right\rfloor > p \), which implies \( 2 \geq \kappa[2,p] > 1 \), so \( \kappa[2,p] = 2 \). Similarly, from \cite{co} and \cite{ge}, the domination number of \( \mathbb{F}_p^3 \) is known to be \( \left\lceil \frac{p^2}{2} \right\rceil \), and since every identifying code is a dominating set, it follows that \( \gamma^{\mathrm{ID}}(\mathbb{F}_p^3) \geq \left\lceil \frac{p^2}{2} \right\rceil > p \) for odd primes \( p \). Hence, \( 3 \geq \kappa[3,p] > 1 \), and Theorem~\ref{2lm9} confirms that \( \kappa[3,p] = 2 \) (see Figure 2 for the case \( p = 3 \)).
	
	In \cite{blas}, Blass et al.\ proved the following result for binary identifying codes, and later in \cite{ki}, Kim and Kim extended it to ternary codes. The proof for general \( p \) follows similarly and is omitted here.
	
	\begin{lem}\label{2lm11}
		Let \( C \) be an identifying code in \( K_p^n \). Then the direct sum \( \mathbb{F}_p \oplus C \subseteq K_p^{n+1} \) is an identifying code in \( K_p^{n+1} \) if and only if \( |J_C(u)| > 1 \) for all \( u \in C \).
	\end{lem}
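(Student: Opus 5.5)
The plan is to compute the identifying sets in \( D = \mathbb{F}_p \oplus C \) explicitly and then compare them. Write a vertex of \( K_p^{n+1} \) as \( (a,x) \) with \( a \in \mathbb{F}_p \) and \( x \in \mathbb{F}_p^n \). A neighbour of \( (a,x) \) either changes only the first coordinate, giving \( (b,x) \), or keeps \( a \) and moves \( x \) to a neighbour of \( x \) in \( K_p^n \). Since \( (a,x) \in D \) if and only if \( x \in C \), this gives
\[
J_D((a,x)) = \bigl(\{a\} \times J_C(x)\bigr) \cup S(a,x),
\]
where \( S(a,x) = \{(b,x) : b \neq a\} \) if \( x \in C \), and \( S(a,x) = \emptyset \) otherwise. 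Everything follows from this formula by sorting elements according to their first coordinate. Domination is immediate, because \( J_C(x) \neq \emptyset \) makes \( J_D((a,x)) \neq \emptyset \).

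For the ``if'' direction, assume \( |J_C(u)| > 1 \) for all \( u \in C \), and suppose \( J_D((a,x)) = J_D((b,y)) \) with \( (a,x) \neq (b,y) \).
\begin{itemize}
\item If \( a = b \), then \( x \neq y \). Restricting both sets to first coordinate \( a \) gives \( J_C(x) = J_C(y) \), since \( S(a,x) \) and \( S(a,y) \) contain no element with first coordinate \( a \). This contradicts that \( C \) is identifying.
\item If \( a \neq b \), restrict both sets to first coordinate \( a \). On the left this slice is \( \{a\} \times J_C(x) \), which is nonempty. On the right it is \( \{(a,y)\} \) if \( y \in C \), and empty otherwise. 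Hence \( y \in C \) and \( J_C(x) = \{y\} \). Symmetrically, restricting to first coordinate \( b \) gives \( x \in C \) and \( J_C(y) = \{x\} \). Since \( y \in J_C(y) \), we get \( x = y \), so \( J_C(x) = \{x\} \) with \( x \in C \). This contradicts the hypothesis \( |J_C(x)| > 1 \).
\end{itemize}

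For the ``only if'' direction, suppose some \( u \in C \) has \( J_C(u) = \{u\} \). Then for any \( a \neq b \),
\[
J_D((a,u)) = \{(a,u)\} \cup \{(c,u) : c \neq a\} = \mathbb{F}_p \times \{u\} = J_D((b,u)).
\]
So \( D \) fails to separate \( (a,u) \) and \( (b,u) \) and is not identifying.

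The only delicate step is the case \( a \neq b \) in the ``if'' direction. There one must note that the off-slice parts \( S(\cdot,\cdot) \) contribute at most one element to any fixed first-coordinate slice, and then use \( y \in J_C(y) \) to force \( x = y \). The remaining steps are bookkeeping. The argument uses only \( p \geq 2 \), so it is uniform in \( p \), as the paper's remark that the proof for general \( p \) follows similarly suggests.
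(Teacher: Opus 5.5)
Your proof is correct and complete. The paper omits the proof, deferring to the binary and ternary arguments of Blass et al.\ and Kim--Kim; your direct computation $J_D((a,x)) = (\{a\}\times J_C(x)) \cup S(a,x)$, followed by comparing first-coordinate slices, is exactly that standard verification (including the one nontrivial case $a\neq b$), and it works uniformly for every $p\geq 2$.
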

	
	In \cite{ra}, it is proved that \( \kappa[n+1,2] \leq \kappa[n,2] + 1 \). The same method applies to \( K_p^n \) for any prime \( p > 2 \), and thus the proof of the following theorem is omitted.
	
	\begin{thm}\label{2lm12}
		Let \( p \) be a prime. Then \( \kappa[n+1,p] \leq \kappa[n,p] + 1 \).
	\end{thm}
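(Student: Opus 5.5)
Let \(C\) be a linear identifying code in \(K_p^n\) of dimension \(\kappa[n,p]\). The plan is to lift it to \(\mathbb{F}_p \oplus C \subseteq K_p^{n+1}\). This set is a subspace of \(\mathbb{F}_p^{n+1}\) of dimension \(\kappa[n,p]+1\). So once it is identifying, we get \(\kappa[n+1,p] \le \kappa[n,p]+1\). By Lemma~\ref{2lm11}, \(\mathbb{F}_p\oplus C\) is identifying exactly when \(|J_C(u)|>1\) for every codeword \(u\in C\). Hence the whole proof reduces to one claim: in any linear identifying code, no codeword is isolated from the other codewords.

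\textbf{Step 1: the exceptional case \(n\le 2\).} Theorem~\ref{2lm6} assumes \(n\ge 3\), so small \(n\) is handled separately. For \(n=1\), \(K_p\) has no identifying code at all, since all vertices are twins. For \(n=2\), we know \(\kappa[2,p]=2\), so \(C=\mathbb{F}_p^2\). Then every codeword has \(|J_C(u)| = 2p-1>1\), and Lemma~\ref{2lm11} applies directly. If one also wants \(p=2\), the analogous statement for \(K_2^n\) is Ranto's result \cite{ra}, which we may cite.

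\textbf{Step 2: \(n\ge 3\) and \(p\) odd.} Here I would argue by contradiction. Suppose some codeword \(u\) has \(J_C(u)=\{u\}\). By Lemma~\ref{2lm1} (translate by \(-u\in C\)), \(J_C(\mathbf{0})=\{\mathbf{0}\}\). This means no weight-one vector lies in \(C\). Since \(C\) is a subspace, every codeword then has \(J_C(c)=\{c\}\).

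Now take the neighbour \(e_1^1\notin C\) of \(\mathbf{0}\). By Theorem~\ref{2lm6}, \(|J_C(e_1^1)|\ge 3\). So \(e_1^1\) has at least two codeword neighbours other than \(\mathbf{0}\), each of the form \(e_1^1+e_i^j\). These cannot have \(i=1\), since then they would have weight one. So they are weight-two codewords \(e_1^1+e_i^j\) with \(i\neq 1\).

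By itself this does not give a contradiction. In fact codes of minimum distance \(\ge 2\) can be identifying, as in the construction of Theorem~\ref{2lm9}. So in that case the route through Lemma~\ref{2lm11} fails, and I would fall back on the direct construction below.

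\textbf{Step 3: the fallback construction, which I expect to be the main obstacle.} Instead of \(\mathbb{F}_p\oplus C\), use
\[
C'=\{(x,c): x\in\mathbb{F}_p,\ c\in C\}\cup\ldots
\]
More concretely: if \(C\) has a parity-check matrix \(\mathcal{H}\), define \(C'\) by the parity-check matrix \([\mathcal{H}\,|\,\mathbf{h}]\) for a suitable new column \(\mathbf{h}\). Adding a column keeps the codimension, so \(\dim C' = \dim C + 1\).

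The identifying property translates into syndrome conditions on the columns of the parity-check matrix. For every syndrome, the set of columns (and their scalar multiples) realizing it must be determined uniquely, and every vertex must be dominated. Choosing \(\mathbf{h}\) to be one of the existing columns of \(\mathcal{H}\) duplicates a coordinate. Then each neighbourhood pattern in \(K_p^{n}\) is reproduced in \(K_p^{n+1}\), with the same local structure as in the original code, and the identifying property is preserved.

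The key verification is that a repeated column does not create equal \(J\)-sets. This requires \(|J_C(u)|\ge 3\) for non-codewords, which Theorem~\ref{2lm6} provides. The main work is the case analysis at distance \(1\) and \(2\) via Lemma~\ref{2tm}, and that is where I expect the difficulty to lie.
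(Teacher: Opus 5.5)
\textbf{Gap: the step that carries the proof is not proved.} The construction you reach in Step~3 works, but you never show that the code with parity-check matrix \([\mathcal{H}\mid \mathbf{h}]\), with \(\mathbf{h}\) a repeated column, is identifying. You defer exactly that verification as ``the main obstacle''. The heuristic you offer (``the same local structure as in the original code'') is too vague to check, and taken literally it is inaccurate. Non-codewords whose syndrome is a multiple of \(\mathbf{h}\) gain an extra codeword neighbour, so the new \(J\)-sets are not copies of the old ones.

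This is not a marginal case. Your opening reduction (``no codeword is isolated'') is false, and Lemma~\ref{2lm11} typically cannot be used. For example, no \(2\)-dimensional identifying code in \(\mathbb{F}_p^3\), \(p\) odd, contains a weight-one word: if \(e_i^1\in C\), every non-codeword has at most two codeword neighbours, contradicting Theorem~\ref{2lm6}. So already for \(n=3\) the whole theorem rests on the step you leave open.

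\textbf{How to close it with tools already in the paper.} Take \(n\ge 3\) and \(p\) odd, and set \(\phi(x_1,\dots,x_{n+1})=(x_1+x_{n+1},x_2,\dots,x_n)\). With \(\mathbf{h}=h_1\), your code is \(C'=\phi^{-1}(C)\). It has dimension \(\dim C+1\), since \(\phi\) is onto with one-dimensional kernel.
\begin{itemize}
\item For \(i\le n\) we have \(\phi(x+e_i^j)=\phi(x)+e_i^j\). Hence \(\phi(x)+e_i^j\mapsto x+e_i^j\) injects \(J_C(\phi(x))\) into \(J_{C'}(x)\).
\item Also \(x\notin C'\) if and only if \(\phi(x)\notin C\). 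Applying Theorem~\ref{2lm6} to \(C\) therefore gives \(|J_{C'}(x)|\ge 3\) for every non-codeword \(x\) of \(C'\).
\item \(\phi\) maps adjacent vertices to adjacent vertices, since \(\phi(x+e_{n+1}^j)=\phi(x)+e_1^j\). So if \(C\) has minimum distance at least \(2\), so does \(C'\).
\item In that case Lemma~\ref{2lm5} finishes the argument. The distance-\(1\)/distance-\(2\) analysis via Lemma~\ref{2tm} that you expected to be hard is exactly what that lemma already packages.
\item If instead \(C\) contains a weight-one word \(w\), then \(c+w\in J_C(c)\) for every \(c\in C\). Lemma~\ref{2lm11} then applies directly to \(\mathbb{F}_p\oplus C\).
\end{itemize}
Together with your Step~1 for \(n\le 2\) and the citation of Ranto for \(p=2\), this completes the proof. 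The paper gives no proof of its own here, only a pointer to Ranto's binary method, so your outline becomes an actual proof once this verification is added.
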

	
	Furthermore, in \cite{ra}, it is shown that \( \kappa[n,2] = n - r \) for \( n = \frac{3(2^r - 1)}{1} + s \) and \( 0 \leq s \leq 3 \cdot 2^r - 1 \). Using the same construction method, the following result holds for general prime \( p > 2 \):
	
	\begin{thm}\label{2lm13}
		Let \( r \geq 1 \) and \( 0 \leq s \leq 3p^r - 1 \), and define \( n = \frac{3(p^r - 1)}{p - 1} + s \). Then \( \kappa[n,p] = n - r \).
	\end{thm}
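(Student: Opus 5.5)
We prove the two inequalities $\kappa[n,p]\ge n-r$ and $\kappa[n,p]\le n-r$ separately, following Ranto's binary argument.

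\textbf{Lower bound.} Let $C$ be a linear identifying code of dimension $k$, and let $\mathcal{H}$ be an $(n-k)\times n$ parity-check matrix, so that cosets of $C$ correspond to syndromes. By Theorem~\ref{2lm6} (or directly for $p=2$), every non-codeword $u$ satisfies $|J_C(u)|\ge 3$. Take $u=e_i^{j}\notin C$, i.e.\ $j\,h_i\neq \mathbf{0}$, where $h_i$ is the $i$th column of $\mathcal{H}$. The vertex $u+e_l^{t}$ lies in $C$ exactly when $j h_i + t h_l=\mathbf{0}$. So $|J_C(u)|\ge 3$ says that each nonzero multiple of $h_i$ is cancelled by at least three pairs $(l,t)$, i.e.\ lies in at least three of the one-dimensional subspaces spanned by the columns.

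Grouping the columns by the projective point $\langle h_i\rangle\in PG(n-k-1,p)$, each point that occurs must occur at least three times. Columns equal to $\mathbf{0}$ also need handling: a zero column means $e_i^j\in C$. Applying the same count at codewords, using $|J_C(c)|\ne 2$, I expect to show that an all-zero column forces a neighbouring pair to be inseparable unless compensated.

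The number of distinct points is at most $\frac{p^{n-k}-1}{p-1}$. Moreover, separation of $e_i^j$ and $e_l^t$ with $h_i,h_l$ spanning the same point requires further structure. Together these give $n\le 3\frac{p^{n-k}-1}{p-1}+(\text{slack})$, and I will match the slack to $3p^{r}-1$ when $k=n-r-1$. Concretely, I aim to prove that $k\le n-r-1$ would force $n<\frac{3(p^{r}-1)}{p-1}+3p^r$. Note that $\frac{3(p^{r+1}-1)}{p-1}=\frac{3(p^r-1)}{p-1}+3p^r$, so this is precisely the bound $n\le 3\cdot\#PG(r,p)-1$. This counting step is the main obstacle: the binary proof uses that each column appears at least three times, with a sharp analysis of when a point may appear exactly three times. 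I would first try to show that every point of $PG(n-k-1,p)$ is used at least three times and that at least one point must be used four times, or that some column pattern is excluded, which yields the ``$-1$''.

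\textbf{Upper bound.} For $n_0=\frac{3(p^r-1)}{p-1}$, take $\mathcal{H}_0$ to consist of each projective point representative repeated three times, i.e.\ three copies of the $r\times\frac{p^r-1}{p-1}$ Hamming parity-check matrix. Verify with Proposition~\ref{2lm3} that the resulting code $C_0$ of dimension $n_0-r$ is identifying. Every $u\notin C_0$ has syndrome $\lambda h$, and each of the three copies of $h$ gives a codeword neighbour in a distinct coordinate, so $|J(u)|\ge 3$ with distinct coordinates. For $u\in C_0$, we have $|J(u)|\ge 1+3=4$ because three columns equal any chosen $h$. Hence the hypotheses of Proposition~\ref{2lm3} hold, and $\kappa[n_0,p]\le n_0-r$.

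To reach larger $n$ with the same redundancy $r$, add $s$ further columns to $\mathcal{H}_0$ (repeating representatives further, or adding zero columns when $s$ is small), keeping redundancy $r$; Proposition~\ref{2lm3} continues to apply since adding columns only increases the neighbour counts. Alternatively, iterate Lemma~\ref{2lm11} and Theorem~\ref{2lm12}; however, those raise the dimension, so the column-adding construction is the intended route, valid up to $s\le 3p^r-1$, where the lower bound for $r+1$ takes over.
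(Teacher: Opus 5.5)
There is a genuine gap in your lower bound, and your upper-bound verification contains a false claim.

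\textbf{Lower bound.} Your argument does not prove $\kappa[n,p]\ge n-r$, and the inequalities in your plan run the wrong way. Testing only the weight-one vertices $e_i^j$ tells you only that points of $PG(n-k-1,p)$ that \emph{occur} among the columns occur at least three times. Combining that with ``the number of points is \emph{at most} $\frac{p^{n-k}-1}{p-1}$'' gives an upper bound on $n$, which cannot bound $k$ from below. Likewise, showing that $k\le n-r-1$ forces $n<\frac{3(p^r-1)}{p-1}+3p^r$ is no contradiction, because that is exactly the hypothesis on $n$.

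The missing step is surjectivity of the syndrome map. $\mathcal{H}$ has $n-k$ independent rows, so every nonzero $\sigma\in\mathbb{F}_p^{n-k}$ equals $\mathrm{syn}(u)$ for some non-codeword $u$. Since $u+e_l^t\in C$ iff $t h_l=-\sigma$, $|J_C(u)|$ equals the number of columns lying in $\langle\sigma\rangle\setminus\{\mathbf{0}\}$. With Theorem~\ref{2lm6}, \emph{every} point of $PG(n-k-1,p)$ therefore occurs at least three times, so $n\ge 3\cdot\frac{p^{n-k}-1}{p-1}$. If $n-k\ge r+1$, this gives $n\ge\frac{3(p^{r+1}-1)}{p-1}=\frac{3(p^r-1)}{p-1}+3p^r>n$, a contradiction. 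So the ``$-1$'' in the range is automatic, no point needs to be used four times, and zero columns need no separate treatment. This is the same content as the counting bound $\kappa[n,p]\ge\lceil\log_p\frac{3p^n}{n(p-1)+3}\rceil$ that the paper records from Theorem~\ref{2lm9}: here $n(p-1)+3=3p^r+s(p-1)<3p^{r+1}$, which already forces $\kappa[n,p]\ge n-r$.

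\textbf{Upper bound.} Your code $C_0$ is the right one, but the verification is wrong. For a codeword $c$, $c+e_l^t\in C_0$ iff $t h_l=\mathbf{0}$, which never happens because $\mathcal{H}_0$ has no zero column. So $J_{C_0}(c)=\{c\}$, not a set of size at least $4$, and the hypotheses of Proposition~\ref{2lm3} fail. Use Lemma~\ref{2lm5} instead: with no zero column the minimum distance is at least $2$, and every non-codeword has $|J|\ge 3$. The same lemma covers appending further copies of nonzero columns.

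Your other option, appending zero columns when $s$ is small, fails already at $s=1$. With a single zero column $l$, each codeword $c$ satisfies $J(c)=J(c+e_l^1)=\{c+e_l^t:t\in\mathbb{F}_p\}$. This is just Lemma~\ref{2lm11} failing, since $|J_{C_0}(c)|=1$.

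Finally, your dismissal of Theorem~\ref{2lm12} is mistaken. Raising the dimension by one per added coordinate is exactly what is needed: $\kappa[n_0+s,p]\le\kappa[n_0,p]+s\le n_0-r+s=n-r$. The paper gives no proof beyond invoking Ranto's method. Its natural reading is precisely this counting lower bound together with Theorem~\ref{2lm12}, applied to a base code like your $C_0$.
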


We have shown that the linear identifying code number of the graph $K_p^n$, denoted by $\gamma^{\mathrm{LID}}(K_p^n)$, is equal to $p^{n - r}$ when

$$
n = \frac{3(p^r - 1)}{p - 1} + s, \quad \text{where } r \geq 1 \text{ and } 0 \leq s \leq 3p^r - 1,
$$

and $p$ is an odd prime.

In particular, when $r = 1$ and $s = 0$, we get $n = 3$, so

$$
\gamma^{\mathrm{LID}}(K_p^3) = p^2,
$$

which confirms that a certain class of linear identifying codes satisfies the original conjecture.

In this section, we studied linear identifying codes in the graph $K_p^n$ for odd primes $p$ and $n \geq 3$, focusing on codes that are different from self-identifying and self-locating-dominating codes, and for which the conjecture holds.

	\section{Concluding Remarks}

	In this paper, we investigated group and linear identifying codes in Hamming graphs of the form $G = \mathbb{Z}_{m_1} \times \mathbb{Z}_{m_2} \times \cdots \times \mathbb{Z}_{m_n}$, endowed with the Hamming metric, where $m_i \geq 2$ for all $i$ and $n \geq 2$. We established the existence of group identifying codes in such graphs and derived several structural and combinatorial properties, including bounds on the minimum cardinality of both identifying and group identifying codes in the absence of twin vertices. For linear codes over finite fields, we studied the parameter $k[n, p]$, denoting the smallest dimension of a $p$-ary linear identifying code of length $n$, and provided recursive and explicit expressions for it in certain cases.
	
	In particular, we proved that for all integers $m \geq 3$, the group identifying code number of the Hamming graph $K_m^3$ satisfies $\gamma^{\mathrm{GID}}(K_m^3) = m^2$, and for all primes $p \geq 3$, the linear identifying code number satisfies $\gamma^{\mathrm{LID}}(K_p^3) = p^2$. These results confirm the conjecture proposed in \cite{go} for both group and linear identifying codes.
	
	Our findings contribute to the theoretical development of identifying codes in algebraic graph structures and provide new tools and results that may be useful for further exploration in coding theory and combinatorics.
	
	\section*{Acknowledgment}
	
	The second (corresponding) author gratefully acknowledges the Department of Science and Technology, New Delhi, India, for awarding the Women Scientist Scheme (DST/WOS-A/PM-14/2021(G)) for research in Basic/Applied Sciences.

	{\centerline{************}}

	% ------------------------------------------------------------------------
	%GATHER{xBib.bib}   % For Gather Purpose Only
	%GATHER{Thesis.bbl} % For Gather Purpose Only
	%\setlinespacing{1.44}
	\bibliographystyle{amsplain}
	%\bibliography{xbib}

\end{document}